\documentclass[11pt]{article}
\usepackage{amsmath, amssymb, amsthm}
\usepackage{enumitem}
\usepackage{hyperref}
\usepackage{tikz}
\usepackage{xcolor}
\usepackage{hyperref}
\usepackage[top=2.5cm, bottom=2.5cm, left=2.5cm, right=2.5cm]{geometry}

\newtheorem{theorem}{Theorem}[section]
\newtheorem{lemma}[theorem]{Lemma}
\newtheorem{proposition}[theorem]{Proposition}
\newtheorem{corollary}[theorem]{Corollary}
\newtheorem{definition}[theorem]{Definition}
\newtheorem{remark}[theorem]{Remark}
\newtheorem{example}[theorem]{Example}

\title{A Bivariate $B$-Restricted Clique Polynomial:\\From Local Neighborhoods to Global Expansion}
\author{Hossein Teimoori Faal}
\date{}

\begin{document}

\maketitle

\begin{abstract}
	Let $G$ be a finite simple graph and $B \subseteq V(G)$. We introduce the \emph{bivariate $B$-restricted clique polynomial}
	\[
	C_B(G;x,y) = \sum_{\substack{K \subseteq V \\ K \text{ is a clique}}} x^{|K|} y^{|K \cap B|},
	\]
	where the coefficient of $x^i y^j$ counts cliques of size $i$ with exactly $j$ vertices in $B$. This polynomial simultaneously captures combinatorial structure, local extremal properties, and spectral constraints associated with the subset $B$. \\ First, we develop vertex and edge deletion recurrences, generalizing classical clique polynomial results.  
	These recurrences imply monotonicity for the largest negative root $\zeta_G(B;y)$ (viewed as a polynomial in $x$ for fixed $y \in [0,1]$) under induced and spanning subgraphs.  
	From this, we derive bounds on $B$-independence numbers, $B$-girth, and clique densities restricted to $B$. \\ Next, we prove that for any integer $r \ge 1$, any $r$-connected $K_{r+3}$-free chordal graph $G$, and any subset $B \subseteq V(G)$, the bivariate clique polynomial $C_B(G;x,y)$ 	is real-stable. \\ Then, we connect $C_B(G;x,y)$ with spectral graph theory. For $(n,d,\lambda)$-graphs, expansion constraints via Tanner's inequality limit clique growth within $B$, yielding explicit bounds on coefficients and $\zeta_G(B;y)$. \\ Finally, we analyze weighted vertices and homomorphism obstructions in this framework, giving a general no-homomorphism criterion. We also conclude the paper with a couple of interesting open problems for young and motivated researchers. 
\end{abstract}


\section{Introduction}

Clique polynomials are fundamental combinatorial objects encoding rich structural information of graphs.  
For a finite simple graph $G=(V,E)$, the classical clique polynomial is
\[
C(G;x) = \sum_{i\ge 0} c_i x^i,
\]
where $c_i$ counts $i$-vertex cliques in $G$ \cite{fisher1990dependence,hajiabolhassan1998clique,hoede1994clique}.  
These polynomials have been studied from multiple perspectives, including root location, extremal combinatorics, and connections to spectral graph theory.


\subsection*{Motivation for a Bivariate Extension}

The classical \(B\)-restricted clique polynomial \cite{teimoori2026Bclique}
\[
C_B(G;x) := \sum_{i=0}^{\omega(G[B])} c_i(B) x^i,
\qquad 
c_i(B) = |\{K \subseteq B : |K| = i \text{ and } K \text{ is a clique in } G\}|,
\]
offers a subtle but powerful reorientation: while formally equivalent to the ordinary clique polynomial of the induced subgraph \(G[B]\), it treats \(G\) as the fixed stage and \(B\) as a movable lens.  
This shift in perspective transforms the polynomial from a static invariant of a subgraph into a dynamic function of a subset within a fixed ambient graph.  
As \(B\) varies, the polynomial \(C_B(G;x)\) becomes a sensitive probe, capturing how the local clique structure of \(G\) is illuminated by the selective focus on \(B\).  
It is no longer merely a count of cliques in a subgraph, but a reflection of how \(B\) interacts with the global combinatorial architecture of \(G\), opening the door to extremal and algebraic phenomena that would otherwise remain hidden in the shadows of the whole.

Building on this insight, the \emph{bivariate \(B\)-restricted clique polynomial}
\[
C_B(G;x,y) = \sum_{\substack{K \subseteq V \\ K \text{ is a clique}}} x^{|K|} y^{|K \cap B|},
\]
deepens the narrative by introducing a second variable that tracks the intersection with \(B\).  
Here, each clique \(K\) contributes not just its size, but also the extent of its allegiance to \(B\).  
The coefficient of \(x^i y^j\) counts cliques of order \(i\) that have exactly \(j\) vertices in \(B\), weaving together two orthogonal dimensions: the global scale of the clique and its local entanglement with the distinguished subset.

A natural connection between the two formulations emerges when we consider the relationship between the variables.  
Observe that the univariate polynomial \(C_B(G;x)\) is recovered from its bivariate counterpart by the substitution \(y = x\):
\[
C_B(G;x,x) = \sum_{\substack{K \subseteq V \\ K \text{ is a clique}}} x^{|K|} x^{|K \cap B|}
= \sum_{\substack{K \subseteq V \\ K \text{ is a clique}}} x^{|K| + |K \cap B|}.
\]
This is \emph{not} simply \(C_B(G;x)\). However, if we instead consider the evaluation \(y = 1\) followed by a projection onto cliques fully contained in \(B\), we obtain a different connection:
\[
\text{The terms with } j = |K \cap B| = |K| \text{ correspond precisely to cliques } K \subseteq B.
\]
Collecting only these contributions from \(C_B(G;x,y)\) yields
\[
\sum_{\substack{K \subseteq B \\ K \text{ is a clique}}} x^{|K|} y^{|K|} = C_B(G;xy),
\]
where the right-hand side denotes the univariate polynomial evaluated at the product \(xy\).  
In particular, setting \(y = 1\) in this extracted sum gives \(C_B(G;x)\), but careful: \(C_B(G;x,1)\) itself sums over \emph{all} cliques in \(G\), not just those in \(B\).  
Thus, the bivariate polynomial contains the univariate \(B\)-restricted polynomial as the diagonal coefficients where \(j = i\) (i.e., cliques fully inside \(B\)), while also encoding the richer structure of cliques that partially intersect \(B\).

The bivariate refinement is more than a generating function; it is a combinatorial lens that brings into focus the interplay between structure and selection.  
It captures not only what cliques exist, but how they are anchored in \(B\), revealing spectral constraints, extremal configurations, and hidden regularities.  
Together, the one-variable and two-variable forms of the \(B\)-restricted clique polynomial form a conceptual bridge: the first emphasizes the subset as a variable window onto a fixed graph, while the second enriches the view by recording the depth of intersection with that window.  
In this light, the polynomials become dual expressions of a unifying theme—the dialogue between a graph and a chosen part, encoded algebraically for discovery and proof.


\subsection*{Roots, Monotonicity, and Local Extremal Bounds}

A key invariant is the largest negative root of $C_B(G;x,y)$ (viewed as a polynomial in $x$ for fixed $y$):
\[
\zeta_G(B;y) := \max \{ z < 0 : C_B(G;z,y)=0 \}.
\]  
Two complementary approaches help understand this invariant:

\begin{enumerate}[leftmargin=*]
	\item \textbf{Interval-based analysis.} By fixing $y \in [0,1]$, one can study $C_B(G;x,y)$ as a univariate polynomial in $x$.  
	Using vertex and edge deletion recurrences, we show monotonicity of $\zeta_G(B;y)$ under induced and spanning subgraphs.  
	This immediately yields bounds on $B$-independence numbers, $B$-girth, and clique densities restricted to $B$, generalizing classical extremal results \cite{hajiabolhassan1998clique}.
	
	\item \textbf{Real-stability perspective \cite{Wagner2011}.} By using the idea of pefect elimination ordering for chordal graphs
	and the corresponding recursive formula for $C_B(G;x,y)$ 
	one can study its \emph{real-stability} properties for the case of highly-connected chordal graphs. Then we can stabish real-rootedness for the 
	sections of this bivariate polynomial.   
	
\end{enumerate}

\subsection*{Spectral Connections and Homomorphism Applications}

For $(n,d,\lambda)$-graphs, Tanner's inequality constrains the neighborhood expansion of $B$ \cite{tanner1984explicit}.  
Using the bivariate polynomial, one can translate these spectral constraints into explicit bounds on clique coefficients and on $\zeta_G(B;y)$.  
Moreover, weighted versions of $C_B(G;x,y)$ encode combinatorial obstructions to graph homomorphisms: if $f: G \to H$ maps $B_G$ onto $B_H$, then
\[
\zeta_G(B_G;y) \ge \zeta_H(B_H;y),
\]
providing a \emph{no-homomorphism criterion} for vertex-restricted settings.

\subsection*{Contributions and Roadmap}

This paper makes the following contributions:

\begin{itemize}[leftmargin=*]
	\item Introduces the bivariate $B$-restricted clique polynomial $C_B(G;x,y)$ and demonstrates its combinatorial and spectral significance.
	\item Establishes deletion recurrences, root monotonicity, and local extremal bounds for $B$-independence, $B$-girth, and clique densities.
	\item Connects the polynomial to expansion properties of $(n,d,\lambda)$-graphs, giving spectral upper bounds on clique growth within $B$.
	\item Provides a real-stability framework for the case of highly connected chordal graphs. 
\end{itemize}

In the following sections, we develop the recurrence theory, prove monotonicity and extremal bounds, explore spectral implications, and discuss open questions related to real-stability, root interlacing, and homomorphism obstructions.



\section{Preliminaries}

Throughout, $G=(V,E)$ denotes a finite simple undirected graph with $|V|=n$.  
For $v \in V$, let $N(v) = \{ u \in V : uv \in E \}$ be the open neighborhood of $v$.  
For $B \subseteq V$, we write $G[B]$ for the induced subgraph on $B$ and $N_B(v) := B \cap N(v)$.

\subsection{Classical Clique Polynomials}

A \emph{clique} in $G$ is a set of pairwise adjacent vertices.  
Let $\mathcal{C}_i(G)$ denote the family of $i$-vertex cliques and $c_i(G) = |\mathcal{C}_i(G)|$.  
The classical clique polynomial \cite{hajiabolhassan1998clique} is
\[
C(G;x) = \sum_{i=0}^{\omega(G)} c_i(G) x^i,
\]
where $\omega(G)$ is the clique number and $c_0(G) = 1$.

\subsection{Bivariate $B$-Restricted Clique Polynomials}

\begin{definition}[Bivariate $B$-Clique Polynomial]
For $G=(V,E)$ and $B \subseteq V$, define
\[
C_B(G;x,y) := \sum_{\substack{K \subseteq V \\ K \text{ clique}}} x^{|K|} y^{|K \cap B|}.
\]
\end{definition}

\begin{remark}
\begin{itemize}
	\item $B = V$ gives $C_V(G;x,y) = C(G;xy)$, recovering the classical clique polynomial up to substitution.  
	\item $B = \emptyset$ gives $C_\emptyset(G;x,y) = C(G;x)$, as $y^0 = 1$.
\end{itemize}
\end{remark}


\begin{example}
	Let $G=K_{3}$ be the complete graph on 3 vertices with vertex set $V=\{1,2,3\}$. Let $B=\{2,3\}$. Then, it is not hard to see that 
	\begin{align*}
	c_{0,0}&=1,\quad c_{0,1}=c_{0,2}=0,\quad c_{1,0}=1,\quad c_{1,1}=2,\quad c_{1,2}=0,\\
	c_{2,0}&=0,\quad c_{2,1}=2,\quad c_{2,2}=1,\quad c_{3,0}=c_{3,1}=c_{3,3}=0,\quad c_{3,2}=1.
	\end{align*}
	Hence, we finally get 
	\begin{equation*}
	C_{B}(G; x,y) = 1 + x(1+2y) + x^{2}(2y + y^{2}) + x^{3}y^{2}. 
	\end{equation*}
\end{example}


\subsection{Largest Negative Root}

For fixed $y \in [0,1]$, $C_B(G;x,y)$ is a univariate polynomial in $x$. Define
\[
Z_B(G;y) := \{ z < 0 : C_B(G;z,y) = 0 \}, \qquad
\zeta_G(B;y) := \max Z_B(G;y)
\]
if $Z_B(G;y)$ is nonempty, and $\zeta_G(B;y) := -\infty$ otherwise.

\subsection{Highly connected Chordal Graphs and Real-Stability}

Recall that A polynomial \(f \in \mathbb{R}[z_1, \dots, z_n]\) is called \emph{real stable} \cite{Wagner2011} if \(f \not\equiv 0\) and 
\[
f(z_1, \dots, z_n) \neq 0 \quad \text{whenever} \quad \operatorname{Im}(z_j) > 0 \text{ for all } j = 1, \dots, n.
\]
For the special case of a \textbf{bivariate polynomial} \(f(x,y) \in \mathbb{R}[x,y]\), this definition reduces to:

\begin{definition}[Bivariate real stability]
	A polynomial \(f(x,y) \in \mathbb{R}[x,y]\) is \emph{real stable} if it is not identically zero and 
	\[
	f(x,y) \neq 0 \quad \text{for all } (x,y) \in \mathbb{C}^2 \text{ such that } \operatorname{Im}(x) > 0 \text{ and } \operatorname{Im}(y) > 0.
	\]
\end{definition}
For univariate polynomials, this is equivalent to real-rootedness. In analogy with the polynomial-proof of Mantel's theorem for 
trinagle-free graphs based on real-rootedness of clique polynomials of $K_{3}$-free graphs, we will show that bivaraite B-restricted cliqupe polynomial of the class of 
highly-connected chordal graphs is indeed real-stable. 

\subsection{Graph Spectra and Expansion}

An $(n,d,\lambda)$-graph is a $d$-regular graph on $n$ vertices whose adjacency matrix has all eigenvalues except $d$ bounded in absolute value by $\lambda$.  
Tanner's inequality \cite{tanner1984explicit} implies strong constraints on $|N_B(v)|$ and on clique growth inside $B$, which we exploit to bound $\zeta_G(B;y)$ and related combinatorial invariants.

\subsection{B-Independent Sets and Local Extremal Parameters}

\begin{definition}[$B$-Independent Set]
A subset $I \subseteq B$ is \emph{$B$-independent} if no two vertices in $I$ are adjacent in $G$.  
The $B$-independence number $\alpha_B(G)$ is the size of the largest $B$-independent set.
\end{definition}

\begin{definition}[$B$-Girth]
The \emph{$B$-girth} $g_B(G)$ is the length of the shortest cycle entirely contained in $B$.  
\end{definition}

\begin{remark}
Many classical extremal results (bounds on independence number, chromatic number, clique number, and girth) have natural $B$-restricted analogues using $\zeta_G(B;y)$.
\end{remark}


\section{Recurrence Relations and Monotonicity}

\subsection{Vertex and Edge Deletion Recurrences}

\begin{lemma}[Vertex Deletion Recurrence]
	\label{lem:vertex}
	Let $v \in V(G)$. Then
	\[
	C_B(G;x,y) = C_B(G \setminus v; x,y) + x \, y^{[v \in B]} \, C_{B \cap N(v)}(G[N(v)]; x,y),
	\]
	where $[v \in B]$ is $1$ if $v \in B$ and $0$ otherwise.
\end{lemma}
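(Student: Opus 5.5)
We prove the recurrence by splitting the clique sum defining $C_B(G;x,y)$ according to whether a clique contains the fixed vertex $v$. Every clique $K$ of $G$ either avoids $v$ or contains it, so
\[
C_B(G;x,y) = \sum_{\substack{K \text{ clique},\ v\notin K}} x^{|K|}y^{|K\cap B|} + \sum_{\substack{K \text{ clique},\ v\in K}} x^{|K|}y^{|K\cap B|}.
\]

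For the first sum, cliques of $G$ not containing $v$ are exactly the cliques of $G\setminus v$. The weight $x^{|K|}y^{|K\cap B|}$ is unchanged if $B$ is replaced by $B\setminus\{v\}$, since $v\notin K$. So this sum equals $C_B(G\setminus v;x,y)$, with the convention that the subset argument is read as $B\cap V(G\setminus v)$. I will state this convention explicitly, because it is implicit in the statement.

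For the second sum, consider the map $K\mapsto K' := K\setminus\{v\}$. It is a bijection from cliques of $G$ containing $v$ onto cliques of $G[N(v)]$, including the empty clique. Indeed, $K'\subseteq N(v)$ because every vertex of $K'$ is adjacent to $v$. Conversely, any clique $K'$ of $G[N(v)]$ extends to the clique $K'\cup\{v\}$ of $G$, since all its vertices are adjacent to $v$.

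Next we track the weights under this bijection. We have $|K| = |K'|+1$. Also $|K\cap B| = |K'\cap B| + [v\in B]$, and $K'\cap B = K'\cap (B\cap N(v))$ because $K'\subseteq N(v)$. Hence
\[
x^{|K|}y^{|K\cap B|} = x\,y^{[v\in B]}\,x^{|K'|}y^{|K'\cap (B\cap N(v))|}.
\]
Summing over $K'$ gives $x\,y^{[v\in B]}\,C_{B\cap N(v)}(G[N(v)];x,y)$. The empty clique $K'=\emptyset$ corresponds to the singleton $\{v\}$ and contributes $x\,y^{[v\in B]}$, which is consistent with $c_0=1$.

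Adding the two pieces yields the lemma. The argument is purely bijective, so there is no real obstacle. The only point requiring care is the bookkeeping of the subset $B$ in the two smaller graphs: it must be restricted to their vertex sets. We also need $v\notin N(v)$, which holds because $G$ is simple, so that the $v$-factor $y^{[v\in B]}$ is counted exactly once.
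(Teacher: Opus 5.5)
Your proof is correct and follows the same route as the paper: split the cliques by whether they contain $v$, identify the $v$-free ones with the cliques of $G\setminus v$, and biject the rest with cliques of $G[N(v)]$ via $K\mapsto K\setminus\{v\}$, tracking the exponents of $x$ and $y$. Your explicit convention that $B$ is read as $B\cap V(G\setminus v)$ in the first term is a harmless clarification the paper leaves implicit.
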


\begin{proof}
	Partition all cliques in $G$ into those that do not contain $v$ and those that do:
	
	\begin{enumerate}
		\item \emph{Cliques not containing $v$.} These are exactly the cliques of $G \setminus v$, and their intersection with $B$ is unchanged. They contribute $C_B(G \setminus v; x,y)$.
		
		\item \emph{Cliques containing $v$.} Any such clique can be written uniquely as $\{v\} \cup K'$, where $K'$ is a clique in $G[N(v)]$.  
		\begin{itemize}
			\item Factor $x$ accounts for the vertex $v$.  
			\item Factor $y^{[v \in B]}$ accounts for whether $v \in B$.  
			\item The intersection of $K'$ with $B$ is exactly $K' \cap (B \cap N(v))$, contributing $C_{B \cap N(v)}(G[N(v)]; x,y)$.
		\end{itemize}
	\end{enumerate}
	Summing these contributions gives the recurrence.
\end{proof}

\begin{lemma}[Edge Deletion Recurrence]
	\label{lem:edge}
	Let $uv \in E(G)$. Then
	\[
	C_B(G;x,y) = C_B(G - uv; x,y) + x^2 \, y^{[u \in B] + [v \in B]} \, C_{B \cap N(u) \cap N(v)}(G[N(u) \cap N(v)]; x,y).
	\]
\end{lemma}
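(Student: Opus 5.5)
The plan is to mimic the proof of Lemma~\ref{lem:vertex}: partition the cliques of $G$ according to whether they contain both endpoints $u$ and $v$.

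First, note that $G - uv$ has the same vertex set as $G$ and differs only in the edge $uv$. A set $K \subseteq V$ that does not contain both $u$ and $v$ is a clique in $G$ if and only if it is a clique in $G - uv$, because the only pair whose adjacency changes is $\{u,v\}$. Conversely, every clique of $G - uv$ fails to contain both $u$ and $v$. Hence the cliques of $G$ not containing $\{u,v\}$ are exactly the cliques of $G - uv$. Their intersections with $B$ are unchanged, since $B$ is the same subset of the common vertex set, so they contribute exactly $C_B(G - uv; x,y)$.

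Second, a clique $K$ of $G$ with $\{u,v\} \subseteq K$ can be written uniquely as $K = \{u,v\} \cup K'$, where $K' = K \setminus \{u,v\}$. Every vertex of $K'$ is adjacent to both $u$ and $v$, so $K' \subseteq N(u) \cap N(v)$, and $K'$ is a clique of the induced subgraph $G[N(u)\cap N(v)]$. Conversely, for any clique $K'$ of $G[N(u) \cap N(v)]$, the set $\{u,v\} \cup K'$ is a clique of $G$: $u$ and $v$ are adjacent, each is adjacent to all of $K'$, and $K'$ is internally complete. This gives a bijection between cliques of $G$ containing $uv$ and cliques of $G[N(u) \cap N(v)]$. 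Note that $u,v \notin N(u)\cap N(v)$ in a simple graph, so the union is disjoint.

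For the weights, $|K| = |K'| + 2$ accounts for the factor $x^2$, and
\[
|K \cap B| = [u\in B] + [v \in B] + |K' \cap (B \cap N(u)\cap N(v))|,
\]
because $K' \subseteq N(u)\cap N(v)$. Summing over $K'$ therefore produces $x^2 y^{[u\in B]+[v\in B]} C_{B\cap N(u)\cap N(v)}(G[N(u)\cap N(v)];x,y)$. Adding the two contributions gives the recurrence.

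There is no real obstacle. The only point requiring care is the first step, where one must check that deleting the edge $uv$ neither creates nor destroys any clique other than those containing both $u$ and $v$, and that the restriction of $B$ passed to the subgraph, namely $B \cap N(u) \cap N(v)$, records $|K'\cap B|$ correctly.
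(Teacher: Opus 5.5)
Your proof is correct and follows the paper's argument: you split the cliques of $G$ into those not containing both $u$ and $v$, which are exactly the cliques of $G-uv$, and those of the form $\{u,v\}\cup K'$ with $K'$ a clique of $G[N(u)\cap N(v)]$. You also verify both directions of these correspondences and the $|K\cap B|$ bookkeeping explicitly, which the paper only asserts.
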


\begin{proof}
	Partition all cliques into those that do not contain both $u$ and $v$, and those that do:
	
	\begin{enumerate}
		\item \emph{Cliques not containing both $u$ and $v$.} These remain cliques in $G - uv$ and contribute $C_B(G-uv;x,y)$.
		
		\item \emph{Cliques containing both $u$ and $v$.} Each is of the form $\{u,v\} \cup K'$ with $K'$ a clique in $G[N(u) \cap N(v)]$.  
		The contribution is $x^2 y^{[u \in B]+[v \in B]} C_{B \cap N(u) \cap N(v)}(G[N(u) \cap N(v)];x,y)$.
	\end{enumerate}
	Summing both parts yields the recurrence.
\end{proof}

\subsection{Monotonicity Under Induced Subgraphs}

\begin{theorem}[Induced Subgraph Monotonicity]
	\label{thm:induced}
	Let $H$ be an induced subgraph of $G$ and $B \subseteq V(H)$. For any $y \in [0,1]$,
	\[
	\zeta_H(B;y) \le \zeta_G(B;y).
	\]
\end{theorem}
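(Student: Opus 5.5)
Fix $y\in[0,1]$ and write $f_G(x):=C_B(G;x,y)$. Since $H$ is obtained from $G$ by deleting vertices one at a time, and each intermediate graph is again an induced subgraph containing $B$, it suffices by induction to treat $H=G\setminus v$ for a single vertex $v\notin B$. (Because $B\subseteq V(H)$, every deleted vertex lies outside $B$.) The goal is then $\zeta_{G\setminus v}(B;y)\le \zeta_G(B;y)$, and the tool is Lemma~\ref{lem:vertex}:
\[
f_G(x)=f_{G\setminus v}(x)+x\,C_{B\cap N(v)}(G[N(v)];x,y).
\]

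I will prove a stronger statement by induction on $|V(G)|$, in the style of Fisher--Solow and Hajiabolhassan--Mehrabadi. The statement is: for every induced subgraph $H$ of $G$ (with the restricted $B$), $f_H(x)>0$ on $(\zeta_G(B;y),0]$. This immediately gives the theorem, because $f_H$ has no root in $(\zeta_G,0]$, and so its largest negative root is at most $\zeta_G$.

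The key steps are as follows.
\begin{enumerate}
\item All coefficients are nonnegative, since $y\ge 0$, and the constant term is $1$. Hence $f_G(0)=1>0$, and $f_G>0$ on $(\zeta_G(B;y),0]$ by the definition of $\zeta_G$ as the largest negative root.
\item To pass from $G$ to $G\setminus v$, iterate Lemma~\ref{lem:vertex} over all vertices. This yields the derivative identity
\[
\frac{d}{dx}\,f_G(x)=\sum_{v} y^{[v\in B]}\,C_{B\cap N(v)}(G[N(v)];x,y),
\]
which holds because differentiating $x^{|K|}$ counts each marked vertex of $K$. Each summand is the polynomial of an induced subgraph $G[N(v)]$ of a smaller graph.
\item Define $g_v(x):=C_{B\cap N(v)}(G[N(v)];x,y)$. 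By the inductive hypothesis each $g_v$ is positive on an interval that contains $(\zeta_G,0]$, provided we already know $\zeta_{G[N(v)]}\le\zeta_G$. Then $f_G'>0$ on $(\zeta_G,0]$, so $f_G$ is increasing there. Writing $f_G(x)-f_{G\setminus v}(x)=x\,y^{[v\in B]}g_v(x)\le 0$ for $x\le 0$, we get $f_{G\setminus v}(x)\ge f_G(x)>0$ on $(\zeta_G,0]$, which is the desired conclusion.
\end{enumerate}

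The main obstacle is the circularity in step~3. The positivity of $g_v$ on $(\zeta_G,0]$ is itself an instance of the monotonicity we are proving, since $G[N(v)]$ is an induced subgraph of $G$. To break it, I would set up a simultaneous induction over all induced subgraphs, ordered by vertex count.

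The cleanest route is a continuity argument. Suppose some induced $H$ had a root in $(\zeta_G,0)$. Choose such an $H$ minimal, and within it take the root $r$ closest to $0$. Minimality gives $g_v>0$ on $[r,0]$ for all neighborhoods $N(v)$ in $H$. Step~2 then shows $f_H$ is increasing on $[r,0]$. Next, compare $H$ with $G$ by adding back the missing vertices one at a time. Each added vertex $w$ changes the polynomial by $x\,y^{[w\in B]}g_w(x)<0$ on $[r,0)$, because $w$'s neighborhood graph is again an induced subgraph strictly smaller than the current graph. Thus the polynomial only decreases, and $f_G(r)\le f_H(r)=0$. Combined with $f_G(0)=1$, the intermediate value theorem produces a root of $f_G$ in $[r,0)$, which lies in $(\zeta_G,0)$. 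This contradicts the definition of $\zeta_G$.

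The subtle point I will verify carefully is that the minimal-counterexample choice really does control the neighborhood graphs of the added vertices, which are induced subgraphs of intermediate graphs rather than of $H$. If this fails, I would fall back on the known interlacing argument for clique polynomials, using the chain $H\subset\cdots\subset G$ directly.
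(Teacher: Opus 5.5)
Your argument has a genuine gap, and it sits exactly where you flag it. In the add-back step you need $g_w(r)\ge 0$, where $g_w$ is the polynomial of $G[N(w)\cap V(H_k)]$ (with $B$ restricted) for an intermediate graph $H\subseteq H_k\subsetneq G$. Minimality of $H$ only tells you that proper induced subgraphs \emph{of $H$} have no root in $(\zeta_G(B;y),0)$. The graphs $G[N(w)\cap V(H_k)]$ are in general not contained in $H$, and they can have more vertices than $H$. Under your contradiction hypothesis they may therefore be counterexamples themselves, with roots in $(r,0)$, and then nothing prevents $r\,y^{[w\in B]}g_w(r)>0$. So $f_G(r)\le f_H(r)=0$ is not established.

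The circularity you noticed in step 3 is likewise never resolved. Showing $g_v>0$ on $(\zeta_G,0]$ is the inequality $\zeta_{G[N(v)]}\le\zeta_G$ for the graph $G$ itself, which induction on $|V(G)|$ does not supply. Two smaller points: $x\,y^{[v\in B]}g_v(x)\le 0$ holds only where $g_v\ge 0$, not for all $x\le 0$; and the derivative identity in step 2, while correct, plays no role.

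The missing idea is to evaluate at the root of the \emph{smaller} graph, which is what the paper's proof does.
\begin{itemize}
\item Take $H=G\setminus v$ and $r=\zeta_{G\setminus v}(B;y)$; there is nothing to prove if $r=-\infty$.
\item Since $N(v)\subseteq V(G)\setminus\{v\}$, the graph $G[N(v)]$ is an induced subgraph of $G\setminus v$. The inductive hypothesis for the smaller graph $G\setminus v$, in the restricted-$B$ form you correctly set up, gives $\zeta_{G[N(v)]}(B\cap N(v);y)\le r$.
\item Hence $g_v>0$ on $(r,0]$, and $g_v(r)\ge 0$ by continuity.
\item Lemma~\ref{lem:vertex} then gives $f_G(r)=r\,y^{[v\in B]}g_v(r)\le 0$, using $y\ge 0$. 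Since $f_G(0)=1$, the intermediate value theorem gives a root of $f_G$ in $[r,0)$, so $\zeta_G(B;y)\ge r$.
\end{itemize}

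Alternatively, you can rescue your own contradiction scheme, with no induction at all, by changing the extremal choice. Instead of taking $H$ minimal, let $r$ be the maximum of $\zeta_F(B\cap V(F);y)$ over all induced subgraphs $F$ of $G$, and let $H$ attain it. Then every induced subgraph of $G$ is positive on $(r,0]$ and nonnegative at $r$. This includes every neighborhood graph that appears while you add vertices back. So each increment $r\,y^{[w\in B]}g_w(r)$ is $\le 0$, giving $f_G(r)\le f_H(r)=0$ and hence $\zeta_G(B;y)\ge r$.
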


\begin{proof}
	We induct on $n = |V(G)|$. The base cases $n = 1,2$ are trivial. Suppose $H = G \setminus v$ for some $v \in V(G) \setminus V(H)$.
	
	\textbf{Case 1: $v \notin B$.}  
	Lemma~\ref{lem:vertex} gives
	\[
	C_B(G;x,y) = C_B(G \setminus v;x,y) + x \, C_{B \cap N(v)}(G[N(v)];x,y).
	\]
	Let $x = \zeta_{G \setminus v}(B;y)$. Then $C_B(G \setminus v;x,y) = 0$, and the second term is nonnegative for $y \in [0,1]$. Since $C_B(G;0,y)=1>0$, the largest negative root of $C_B(G;x,y)$ must satisfy $\zeta_G(B;y) \ge \zeta_{G \setminus v}(B;y)$.
	
	\textbf{Case 2: $v \in B$.}  
	Then
	\[
	C_B(G;x,y) = C_{B\setminus\{v\}}(G \setminus v;x,y) + x y \, C_{B \cap N(v)}(G[N(v)];x,y).
	\]
	Substituting $x = \zeta_{G \setminus v}(B;y)$ and using the induction hypothesis on $G[N(v)]$ shows that the second term is nonnegative. Hence $\zeta_G(B;y) \ge \zeta_{G \setminus v}(B;y)$.
\end{proof}

\subsection{Monotonicity Under Spanning Subgraphs}

\begin{theorem}[Spanning Subgraph Monotonicity]
	\label{thm:spanning}
	Let $H$ be a spanning subgraph of $G$ ($V(H) = V(G)$, $E(H) \subseteq E(G)$). Then for any $B \subseteq V(G)$ and $y \in [0,1]$,
	\[
	\zeta_G(B;y) \le \zeta_H(B;y).
	\]
\end{theorem}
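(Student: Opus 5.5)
Removing an edge can only delete cliques, so $H$ has "fewer" cliques than $G$, and the plan is to show that this pushes the largest negative root toward $0$. Since $H$ is reached from $G$ by deleting the edges of $E(G)\setminus E(H)$ one at a time, it suffices to treat $H = G - uv$ for a single edge $uv \in E(G)$. The general case then follows by chaining the single-edge inequalities $\zeta_{G}(B;y) \le \zeta_{G-uv}(B;y)$ along the deletion sequence.

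For a single edge, apply Lemma~\ref{lem:edge} and rearrange:
\[
C_B(G-uv;x,y) = C_B(G;x,y) - x^2 y^{[u\in B]+[v\in B]}\, C_{B'}(G[N(u)\cap N(v)];x,y), \qquad B' := B\cap N(u)\cap N(v).
\]
Fix $y\in[0,1]$ and let $z=\zeta_G(B;y)$; if $z=-\infty$ there is nothing to prove. The key facts are:
\begin{itemize}
\item $C_B(G;x,y)>0$ on $(z,0]$, because its value at $x=0$ is $1$ and it has no root in $(z,0)$.
\item $C_B(G;z,y)=0$, so at $x=z$ we get $C_B(G-uv;z,y) = -z^2 y^{[u\in B]+[v\in B]} C_{B'}(G[N(u)\cap N(v)];z,y)$.
\end{itemize}
If the right-hand side is $\le 0$, then $C_B(G-uv;\cdot,y)$ is $\le 0$ at $z$ and equals $1$ at $0$. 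The intermediate value theorem then gives a root of $C_B(G-uv;x,y)$ in $[z,0)$, and hence $\zeta_{G-uv}(B;y)\ge z$, as required.

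So everything hinges on the sign claim $C_{B'}(G[N(u)\cap N(v)];z,y)\ge 0$, and I expect this to be the main obstacle. The tool is Theorem~\ref{thm:induced}. The graph $W:=G[N(u)\cap N(v)]$ is an induced subgraph of $G$, and the plan is to use induced monotonicity to get $\zeta_W(B';y)\le \zeta_G(B';y)$, so that $C_{B'}(W;\cdot,y)$ has no root in $(\zeta_W(B';y),0]$ and is positive there.

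The difficulty is that this must be compared with $z=\zeta_G(B;y)$, not $\zeta_G(B';y)$; the restricted set changes from $B$ to $B'$. To handle this I would use a direct induction on $|V(G)|+|E(G)|$ rather than quoting Theorem~\ref{thm:induced} verbatim. The inductive hypothesis would be that every induced subgraph $W$ of $G$, with the restriction of $B$ to $W$, satisfies $\zeta_W(B\cap V(W);y)\le\zeta_G(B;y)$. This is exactly the form in which Theorem~\ref{thm:induced} is used in its own proof. Granting it, $C_{B'}(W;x,y)>0$ for all $x\in(z,0]$, and $C_{B'}(W;z,y)\ge 0$ then follows by continuity.

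I would also note that for $y\in[0,1]$ the factor $y^{[u\in B]+[v\in B]}$ is nonnegative, so it does not affect the sign. If it vanishes (that is, $y=0$ and $u$ or $v$ lies in $B$), the two polynomials agree at $z$ and the conclusion is immediate. Finally, the degenerate case where $W$ is empty gives $C_{B'}(W;x,y)=1>0$ and needs no argument.
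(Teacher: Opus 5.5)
Your argument follows the paper's route: reduce to a single edge via Lemma~\ref{lem:edge}, evaluate at $z=\zeta_G(B;y)$, use the sign of the correction term, then apply the intermediate value theorem and chain over edges. You are more careful than the paper. The paper asserts that $x^2y^{[u\in B]+[v\in B]}C_{B'}(G[N(u)\cap N(v)];x,y)$ is nonnegative at $x=\zeta_G(B;y)$ ``for $y\in[0,1]$'', but only the prefactor is obviously nonnegative. The sign of $C_{B'}(W;z,y)$ at a negative point is the real content, and you correctly reduce it to the comparison $\zeta_W(B\cap V(W);y)\le\zeta_G(B;y)$ for the induced subgraph $W=G[N(u)\cap N(v)]$.

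The weak point is how you obtain that comparison; as written, you grant it rather than prove it. It cannot serve as the inductive hypothesis of an induction on $|V(G)|+|E(G)|$, because it concerns $G$ itself rather than a smaller instance. Theorem~\ref{thm:induced} as stated does not supply it either, since it requires $B\subseteq V(H)$. Monotonicity in $B$ cannot bridge the gap, because it points the wrong way: for $B'\subseteq B$ and $y\in[0,1]$ one has $\zeta_G(B';y)\ge\zeta_G(B;y)$. For example, with $G=K_1$ on vertex $v$, $\zeta_G(\emptyset;y)=-1\ge -1/y=\zeta_G(\{v\};y)$. The paper's proof of Theorem~\ref{thm:induced} is also not a safe source. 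Its Case~1 claims $x\,C_{B\cap N(v)}(G[N(v)];x,y)\ge 0$ at a negative $x$, which is the opposite of the sign needed.

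So state the comparison as a separate lemma and prove it by induction on $|V(G)|$ using Lemma~\ref{lem:vertex}:
\begin{itemize}
\item For $v\in V(G)$, let $z'=\zeta_{G-v}(B\setminus\{v\};y)$ (the case $z'=-\infty$ is trivial).
\item The recurrence gives $C_B(G;z',y)=z'\,y^{[v\in B]}\,C_{B\cap N(v)}(G[N(v)];z',y)$.
\item The last factor is $\ge 0$ by the lemma applied to the smaller graph $G-v$ and its induced subgraph $G[N(v)]$.
\item Hence $C_B(G;z',y)\le 0<1=C_B(G;0,y)$, and the intermediate value theorem gives $\zeta_G(B;y)\ge z'$.
\item Chaining single-vertex deletions gives the statement for arbitrary induced $W$ with $B\cap V(W)$.
\end{itemize}
With this lemma in place, your edge argument is complete and correct.
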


\begin{proof}
	It suffices to remove a single edge $uv \in E(G)$.  
	By Lemma~\ref{lem:edge}:
	\[
	C_B(G;x,y) = C_B(G-uv;x,y) + x^2 y^{[u \in B] + [v \in B]} C_{B \cap N(u) \cap N(v)}(G[N(u) \cap N(v)];x,y).
	\]
	Evaluating at $x = \zeta_G(B;y)$, the first term is $\le 0$ by definition of $\zeta_G(B;y)$, and the second term is nonnegative for $y \in [0,1]$. Therefore, $\zeta_H(B;y) \ge \zeta_G(B;y)$. Induction on the number of removed edges completes the proof.
\end{proof}

\subsection{Extremal Bounds via $\zeta_G(B;y)$}

\begin{proposition}[$B$-Independence Number Bound]
	\label{prop:independence}
	Let $\alpha_B(G)$ denote the $B$-independence number. Then
	\[
	\alpha_B(G) \le -\frac{1}{\zeta_G(B;1)}.
	\]
\end{proposition}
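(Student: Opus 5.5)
The plan is to compare $G$ with the edgeless induced subgraph spanned by a maximum $B$-independent set, and then apply Theorem~\ref{thm:induced} at $y=1$.

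\textbf{Step 1: the comparison graph.} Let $I \subseteq B$ be a $B$-independent set with $|I| = \alpha_B(G) =: \alpha$, and assume $\alpha \ge 1$ (otherwise there is nothing to prove). Let $H = G[I]$, which is an induced subgraph of $G$ with no edges. Take $I$ itself as the distinguished subset of $H$, so the hypothesis $I \subseteq V(H)$ of Theorem~\ref{thm:induced} holds. The only cliques of $H$ are $\emptyset$ and the $\alpha$ singletons, each of which lies in $I$. Hence
\[
C_I(H;x,y) = 1 + \alpha x y ,
\]
and at $y=1$ its unique root is $x = -1/\alpha$. So $\zeta_H(I;1) = -1/\alpha$.

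\textbf{Step 2: apply monotonicity.} At $y=1$ the weight $y^{|K\cap B|}$ is identically $1$. Therefore $C_B(G;x,1) = C(G;x)$ for every $B$, and likewise $\zeta_G(B;1) = \zeta_G(I;1)$. Theorem~\ref{thm:induced}, applied with subset $I$ and $y=1$, then gives
\[
-\tfrac{1}{\alpha} = \zeta_H(I;1) \le \zeta_G(I;1) = \zeta_G(B;1).
\]
In particular $\zeta_G(B;1) \in [-1/\alpha, 0)$ is a finite negative number. Writing $\zeta_G(B;1) = -t$ with $0 < t \le 1/\alpha$, we get $-1/\zeta_G(B;1) = 1/t \ge \alpha$, which is the claim.

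\textbf{Main obstacle: existence of a negative root.} The inequality is only meaningful if $\zeta_G(B;1) \neq -\infty$, that is, if $C(G;x)$ really has a negative real root. If we take Theorem~\ref{thm:induced} at face value, its conclusion $\zeta_G \ge \zeta_H > -\infty$ already forces this. However, its proof only compares signs at the root of the smaller polynomial, so I would also give a direct argument.

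I would first try the intermediate value theorem on $[-1/\alpha, 0]$, using $C(G;0)=1>0$. The goal is to show $C(G;x) \le 0$ at some point of this interval, for instance by induction along the vertex recurrence of Lemma~\ref{lem:vertex}.

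Failing that, I would fall back on the classical fact from \cite{hajiabolhassan1998clique} that every clique polynomial has a real root in $[-1,0)$. Together with the comparison in Step 2, this places $\zeta_G(B;1)$ in $[-1/\alpha, 0)$.
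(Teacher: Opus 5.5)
Your proof is correct and is the paper's argument: restrict to the edgeless induced subgraph $G[I]$ on a maximum $B$-independent set, read off its root $-1/\alpha$, and apply Theorem~\ref{thm:induced} at $y=1$. That theorem's conclusion $\zeta_G(B;1)\ge -1/\alpha>-\infty$ already guarantees the negative root you worried about, and the intermediate-value induction you sketch is exactly what proves it. Your details are cleaner than the paper's, which writes $C_B(G[I];x,1)=(1+x)^{\alpha}$ (an independence polynomial, whose root is $-1$) where your $1+\alpha xy$ is the correct clique polynomial. Your remark that the distinguished subset is irrelevant at $y=1$ also properly justifies applying Theorem~\ref{thm:induced} with $I$ in place of $B$.
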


\begin{proof}
	Let $I \subseteq B$ be a $B$-independent set of size $\alpha_B(G)$.  
	Then $C_B(G[I];x,1) = (1+x)^\alpha$. Its largest negative root is $-1/\alpha$. By Theorem~\ref{thm:induced}, $\zeta_G(B;1) \ge -1/\alpha$. Rearranging gives $\alpha \le -1/\zeta_G(B;1)$.
\end{proof}

\begin{proposition}[$B$-Girth Bound]
	\label{prop:girth}
	Let $g_B(G)$ denote the $B$-girth. Then
	\[
	g_B(G) \le 2 + \left\lfloor -\frac{2}{\zeta_G(B;1)} \right\rfloor.
	\]
\end{proposition}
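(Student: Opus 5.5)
The plan is to follow the template of Proposition~\ref{prop:independence}: find a small induced subgraph of $G$ whose largest negative root can be computed explicitly, and transfer the bound to $G$ through Theorem~\ref{thm:induced}.

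\textbf{Reduction.} First I record that at $y=1$ the weight $y^{|K\cap B|}$ is identically $1$. Hence $C_B(G;x,1)=C(G;x)$ for every $B$, so $\zeta_G(B;1)$ does not depend on $B$. This lets me replace $B$ by any convenient subset when invoking Theorem~\ref{thm:induced}, whose hypothesis requires the subset to lie inside the induced subgraph. If $G[B]$ is acyclic, $g_B(G)=\infty$ and the statement is vacuous or conventional, so assume $g:=g_B(G)<\infty$. Let $C\subseteq B$ be a shortest cycle in $G[B]$.

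\textbf{Key structural step.} If $g\ge 4$, then $C$ is chordless. Indeed, a chord would split $C$ into two shorter cycles, both still inside $B$, contradicting minimality. So $H:=G[V(C)]\cong C_g$ is an induced subgraph of $G$ with $V(H)\subseteq B$. Because $g\ge4$, $C_g$ contains no triangles, and therefore
\[
C_{V(H)}(H;x,1)=1+gx+gx^2 .
\]
Its discriminant $g^2-4g$ is nonnegative, so both roots are real and negative. Their product is $1/g$ and their sum is $-1$, so the root of larger modulus has modulus at least $1/2$. Consequently the largest negative root $r$ satisfies $|r|\le 2/g\le 2/(g-2)$.

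\textbf{Transfer to $G$.} Theorem~\ref{thm:induced} gives $\zeta_G(B;1)=\zeta_G(V(H);1)\ge r$. Since both quantities are negative, this means $|\zeta_G(B;1)|\le |r|\le 2/(g-2)$, that is, $-2/\zeta_G(B;1)\ge g-2$. Because $g-2$ is an integer, it is at most $\lfloor -2/\zeta_G(B;1)\rfloor$, which is the claimed inequality.

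\textbf{Triangle case and main obstacle.} The case $g=3$ is handled separately. Here $H\cong K_3$ and $C(K_3;x)=(1+x)^3$ has largest negative root $-1$. Monotonicity gives $|\zeta_G(B;1)|\le 1$, so the right-hand side is at least $2+2=4\ge 3$. The main point needing care is the chordlessness and triangle-freeness argument that makes the polynomial exactly $1+gx+gx^2$. Closely tied to it is the observation that $\zeta_G(B;1)$ is independent of $B$, which is what makes Theorem~\ref{thm:induced} applicable with the subset $V(C)$. The root estimate itself is routine once Vieta's formulas are used.
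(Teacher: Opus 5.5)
Your proof is correct and follows the same route as the paper: take a shortest cycle in $G[B]$, view it as an induced $C_g$ with $C(C_g;x)=1+gx+gx^2$, bound its largest negative root below by $-2/(g-2)$, and transfer the bound to $G$ via Theorem~\ref{thm:induced}. You also spell out steps the paper's short proof leaves implicit: that a shortest cycle is chordless, that $C_B(G;x,1)=C(G;x)$ (which is what lets the hypothesis $B\subseteq V(H)$ of Theorem~\ref{thm:induced} be met), the Vieta estimate for the root, and the separate triangle case $g=3$.
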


\begin{proof}
	Let $C$ be a cycle of length $g = g_B(G)$ entirely in $B$. Then $C_B(C;x,1) = 1 + gx + gx^2$ for $g \ge 4$.  
	Its largest negative root is $r = \frac{-g + \sqrt{g(g-4)}}{2g} \ge -\frac{2}{g-2}$.  
	By Theorem~\ref{thm:induced}, $\zeta_G(B;1) \ge r \ge -2/(g-2)$. Solving for $g$ gives the stated bound.
\end{proof}


\section{Real-Stability of Bivariate $B$ - Clique Polynomials for Highly Connected Chordal Graphs}

In this section, we prove that for any integer $r \ge 1$, any $r$-connected $K_{r+3}$-free chordal graph $G$, and any subset $B \subseteq V(G)$, the bivariate clique polynomial
	\[
	C_B(G;x,y)
	=
	\sum_{K \subseteq V(G) \text{ clique}} x^{|K|} y^{|K\cap B|}
	\]
	is real-stable.  
	
The proof uses a perfect elimination ordering of chordal graphs, multiaffine expansions, and reduction to triangle-free neighborhoods.
\\
Recall that a polynomial \(f(x,y) \in \mathbb{R}[x,y]\) is \emph{real stable} if it is not identically zero and 
	\[
	f(x,y) \neq 0 \quad \text{for all } (x,y) \in \mathbb{C}^2 \text{ such that } \operatorname{Im}(x) > 0 \text{ and } \operatorname{Im}(y) > 0.
\]
For univariate polynomials, this is equivalent to real-rootedness.
We also recall that for a graph $G=(V,E)$ and $B\subseteq V$, we define 
the \emph{bivariate $B$-clique polynomial} as 
	\[
	C_B(G;x,y)
	=
	\sum_{K \subseteq V(G) \text{ clique}} x^{|K|} y^{|K\cap B|}.
	\].

\subsection{Triangle-Free Base Case}

\begin{theorem}[Triangle-Free Case]
	If $H$ is triangle-free and $B_H\subseteq V(H)$, then
	\[
	C_{B_H}(H;x,y)
	\]
	is real-stable.
\end{theorem}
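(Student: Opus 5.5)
The plan is to reduce the statement to an explicit quadratic in $x$ and then test stability directly. Since $H$ is triangle-free, only cliques of size $0,1,2$ occur. Write $a=|V(H)\setminus B_H|$ and $b=|B_H|$, and let $e_j$ be the number of edges of $H$ with exactly $j$ endpoints in $B_H$, for $j=0,1,2$. Then
\[
C_{B_H}(H;x,y)=1+x\,(a+by)+x^2\,(e_0+e_1y+e_2y^2).
\]
The key structural fact I will use is that $H$ is triangle-free. By Mantel-type (or Motzkin--Straus) inequalities, this bounds each $e_j$ against $a$ and $b$: $e_0\le a^2/4$, $e_2\le b^2/4$, $e_1\le ab$. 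More sharply, a weighted Mantel inequality gives, for every $y\ge 0$,
\[
e_0+e_1y+e_2y^2\le \tfrac14 (a+by)^2 .
\]

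Step 1 is to introduce the multiaffine vertex polynomial $P(\mathbf z)=1+\sum_v z_v+\sum_{uv\in E}z_uz_v$. From it, $C_{B_H}$ is obtained by setting $z_v=x$ for $v\notin B_H$ and $z_v=xy$ for $v\in B_H$. Step 2 is to establish stability-type information for $P$ on the relevant specialisations. The discriminant condition $(a+by)^2-4(e_0+e_1y+e_2y^2)\ge 0$ gives real-rootedness in $x$ for every real $y\ge0$. Step 3 is to upgrade this to bivariate stability. For that I would use the standard criterion: $f(x,y)$ is real stable iff $f(x,\,s+t\eta)$ is real-rooted in $x$ for all real $s$ and all $t>0$, $\eta$ in the upper half-plane, or equivalently iff all lines $y=s+tx$ with $t>0$ give real-rooted restrictions. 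Applying this requires controlling the sign of the discriminant along complex directions, not just real $y\ge0$.

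The main obstacle is Step 3. The substitution $z_v\mapsto xy$ is not a stability-preserving operation, because the product of two upper-half-plane variables can be any complex number. So I would first test the smallest case, $H=K_1$ with $B_H=V(H)$. There $C_{B_H}=1+xy$, which vanishes at $x=y=i$, both in the upper half-plane, so it is not real stable.

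This check shows the statement cannot hold as written without an extra hypothesis or a modified notion of stability. Examples are $B_H=\emptyset$, where $C_{B_H}=C(H;x)$ is real-rooted by the Mantel argument, or stability of the sections $x\mapsto C_{B_H}(H;x,y)$ for each $y\ge 0$, which is exactly what Steps 1--2 deliver. My plan is therefore to carry out Steps 1--2 in full. I would then determine precisely which hypothesis on $B_H$ makes the line-restriction criterion of Step 3 go through, starting from the counterexample $1+xy$.
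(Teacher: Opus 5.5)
Your counterexample is correct and decisive. For $H=K_1$ with $B_H=V(H)$ one has $C_{B_H}(H;x,y)=1+xy$, which vanishes at $x=y=i$. The statement is therefore false as written, and no proof of it can exist.

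\textbf{Where the paper's argument fails.} It breaks exactly where you said. The paper builds $F_H(\mathbf{u})=1+\sum_v u_v+\sum_{uv\in E(H)}u_uu_v$ and asserts that the substitution $u_v\mapsto xy$ preserves stability. In fact $\{xy:\operatorname{Im}x>0,\ \operatorname{Im}y>0\}=\mathbb{C}\setminus[0,\infty)$, so your phrase ``any complex number'' is a slight overstatement, but the point stands. The earlier steps are also defective:
\begin{itemize}
\item The recurrence $F_H=F_{H-v}+u_vF_{H-N[v]}$ is the independence-polynomial recurrence. For cliques in a triangle-free $H$ it should be $F_H=F_{H-v}+u_v(1+\sum_{w\in N(v)}u_w)$.
\item $F_H$ need not be stable at all. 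For the path $v_1v_2v_3v_4$, the real specialization $u_2=-2$, $u_3=0$ leaves $u_4-u_1-1$, which vanishes at $u_1=i$, $u_4=1+i$.
\end{itemize}
So even the true case $B_H=\emptyset$ is not established by the paper's route. Your Mantel/Motzkin--Straus discriminant argument is the correct proof there. It also correctly gives real-rootedness of the sections $x\mapsto C_{B_H}(H;x,y)$ for $y\ge 0$. The restriction $y\ge 0$ is genuinely needed: for $H=C_4\sqcup C_4$ with $B_H$ the vertex set of one copy, $C_{B_H}(H;x,-1)=1+8x^2$.

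\textbf{Which $B_H$ rescues Step 3.} You planned to find the hypothesis on $B_H$ that makes Step 3 work. The answer is $B_H=\emptyset$ and nothing else, and this does not depend on triangle-freeness.
\begin{itemize}
\item Fix $y$ with $\operatorname{Im}y>0$. The polynomial $x\mapsto C_{B_H}(H;x,y)$ has constant term $1$ and linear coefficient $a+by$ with $b=|B_H|$.
\item Hence it equals $\prod_k(1-x/\rho_k)$, and $a+by=-\sum_k\rho_k^{-1}$.
\item Stability forces $\operatorname{Im}\rho_k\le 0$, hence $\operatorname{Im}\rho_k^{-1}\ge 0$, hence $\operatorname{Im}(a+by)\le 0$.
\item This contradicts $\operatorname{Im}(a+by)=b\operatorname{Im}y>0$ whenever $b\ge 1$. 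If there are no roots at all, then $a+by=0$, which is equally impossible.
\end{itemize}
Thus $C_B(G;x,y)$ is never real stable for nonempty $B$, for any graph $G$. This also refutes the paper's main chordal theorem, and its worked example, whose $xy$-coefficient is $2$.

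\textbf{A small correction to Step 3.} The first form of the line criterion you quote is not right. The correct statement is your second formulation: $f$ is real stable iff $x\mapsto f(x,s+\tau x)$ is real-rooted or identically zero for all real $s$ and all $\tau>0$.
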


\begin{proof}
	In a triangle-free graph, cliques are vertices and edges.  
	The multivariate polynomial
	\[
	F_H(\mathbf{u})
	=
	1 + \sum_{v} u_v + \sum_{uv\in E(H)} u_u u_v
	\]
	is real-stable by induction using
	\[
	F_H = F_{H-v} + u_v F_{H-N[v]}.
	\]
	Specializing $u_v=x$ or $xy$ preserves stability.
\end{proof}

\subsection{Chordal Graphs and Neighborhoods}

\begin{definition}
	A graph is \emph{$r$-connected} if removal of fewer than $r$ vertices leaves it connected.
\end{definition}

\begin{definition}
	A graph is \emph{chordal} if every induced cycle has length $3$.
\end{definition}

\begin{definition}
	For $S\subseteq V(G)$, define the common neighborhood
	\[
	N(S)=\bigcap_{v\in S} N(v).
	\]
\end{definition}

\begin{lemma}[Neighborhood Geometry]
	Let $G$ be $r$-connected, $K_{r+3}$-free, and chordal.  
	Then for every $r$-clique $K_r$:
	
	\begin{enumerate}
		\item $N(K_r)\neq\emptyset$,
		\item $G[N(K_r)]$ is triangle-free,
		\item $G[N(K_r)]$ is chordal.
	\end{enumerate}
\end{lemma}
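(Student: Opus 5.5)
The plan is to prove items 2 and 3 directly from the hypotheses, and to get item 1 from the clique-tree structure of chordal graphs combined with $r$-connectivity. Throughout, I use the standard convention that an $r$-connected graph has at least $r+1$ vertices. Some such convention is genuinely needed: for example, $G=K_1$ with $r=1$ has $N(K_1)=\emptyset$. I will state this assumption explicitly in the proof.

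\textbf{Items 2 and 3.} Let $K_r$ be an $r$-clique and $S=N(K_r)$. Every vertex of $S$ is adjacent to every vertex of $K_r$, and $S\cap K_r=\emptyset$. Suppose $G[S]$ contained a triangle $T$. Then $K_r\cup T$ would be a clique on $r+3$ vertices, contradicting $K_{r+3}$-freeness, so $G[S]$ is triangle-free. Chordality is hereditary for induced subgraphs: an induced cycle of $G[S]$ is an induced cycle of $G$, so it has length $3$. Hence $G[S]$ is chordal.

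\textbf{Item 1.} Suppose for contradiction that $N(K_r)=\emptyset$. Then no vertex extends $K_r$ to a larger clique, so $K_r$ is a maximal clique of $G$. Since $|V(G)|\ge r+1$, the graph $G$ is not equal to $K_r$, and so $G$ has at least one other maximal clique. Moreover $G$ is connected, because $r\ge 1$.

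I will then invoke the clique-tree theorem for connected chordal graphs. The maximal cliques can be arranged as the nodes of a tree $\mathcal{T}$ such that, for every edge $QQ'$ of $\mathcal{T}$, the set $Q\cap Q'$ is a minimal vertex separator of $G$. Every minimal separator arises this way.

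Take a neighbor $Q'$ of $K_r$ in $\mathcal{T}$; one exists because $\mathcal{T}$ has at least two nodes. Then $S'=K_r\cap Q'$ separates $K_r\setminus S'$ from $Q'\setminus S'$. Both of these sets are nonempty, because $K_r$ and $Q'$ are distinct maximal cliques and neither contains the other. Since $K_r\not\subseteq Q'$, we also have $|S'|\le r-1$. So deleting $S'$, which has fewer than $r$ vertices, disconnects $G$. This contradicts $r$-connectivity, and so $N(K_r)\neq\emptyset$.

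As a fallback that avoids clique trees, I would use a perfect elimination ordering, or Dirac's theorem that every minimal separator of a chordal graph is a clique. Pick a vertex $w\notin K_r$ and a vertex $a\in K_r$ not adjacent to $w$; such an $a$ exists, since otherwise $w\in N(K_r)$. Take a minimal $a$–$w$ separator $S''$. It is a clique, and it has size at least $r$ by connectivity. One then needs to show that $K_r$ can be absorbed into a larger clique.

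\textbf{Main obstacle.} The only nontrivial step is item 1, specifically justifying that an edge of the clique tree yields a genuine separator of size less than $r$. This relies on citing the clique-tree/minimal-separator characterization of chordal graphs correctly, including the nonemptiness of both sides.
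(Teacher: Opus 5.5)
Your proposal is correct. For items 2 and 3 it is exactly the paper's argument: a triangle in $N(K_r)$ together with $K_r$ gives a $K_{r+3}$, and chordality passes to induced subgraphs.

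For item 1 you take a genuinely different route, and it is the sound one. The paper argues in one line that if $N(K_r)=\emptyset$ then deleting $K_r$ disconnects $G$, contradicting $r$-connectivity. That does not work, for two reasons:
\begin{itemize}
\item Deleting exactly $r$ vertices is permitted to disconnect an $r$-connected graph, so even a disconnection would not be a contradiction.
\item $N(K_r)=\emptyset$ does not force $G-K_r$ to be disconnected in the first place.
\end{itemize}
The paper's argument also never uses chordality, and chordality is indispensable here. For $r=2$, the cycle $C_5$ is $2$-connected and $K_5$-free, yet no edge has a common neighbor. Removing the two endpoints of an edge even leaves a connected path.

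Your clique-tree argument supplies exactly the missing ingredient. If $K_r$ were a maximal clique, its intersection $K_r\cap Q'$ with a neighboring node of the clique tree is a separator of size at most $r-1$. That is what genuinely contradicts $r$-connectivity. The separation property you flag as the main obstacle is standard and follows from the running-intersection property. Deleting the tree edge $K_rQ'$ splits $V$ into $V_1\cup V_2$ with $V_1\cap V_2=K_r\cap Q'$, and there are no edges between $V_1\setminus V_2$ and $V_2\setminus V_1$.

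Your insistence on $|V(G)|\ge r+1$ is also a real correction to the statement, not pedantry. Under the paper's definition of $r$-connectivity, $G=K_r$ satisfies every hypothesis and has $N(K_r)=\emptyset$.

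The only cost of your route is citing the clique-tree theorem. The Dirac-style fallback you sketch is unfinished, but it is not needed.
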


\begin{proof}
	If $N(K_r)=\emptyset$, removing $K_r$ disconnects $G$, contradicting $r$-connectivity.
	
	If $N(K_r)$ contains a triangle $\{a,b,c\}$, then
	$K_r\cup\{a,b,c\}$ forms a $K_{r+3}$, contradiction.
	
	Chordality is inherited by induced subgraphs.
\end{proof}

\subsection{Perfect Elimination Ordering (PEO) Induction}

\begin{definition}
	A \emph{perfect elimination ordering (PEO)} of a chordal graph $G$ is an ordering of vertices $v_1,\dots,v_n$ such that for each $v_i$, the neighbors among $\{v_1,\dots,v_{i-1}\}$ form a clique.
\end{definition}

\begin{lemma}[PEO Induction for Clique Polynomials]
	Let $G$ be chordal and $v_1,\dots,v_n$ a PEO. Let $G_i = G[\{v_1,\dots,v_i\}]$.  
	Then
	\[
	C_B(G_i;x,y) = C_B(G_{i-1};x,y) + x \, y^{\mathbf{1}_{v_i\in B}} \prod_{u \in N_{<i}(v_i)} (1 + x \, y^{\mathbf{1}_{u\in B}}),
	\]
	where $N_{<i}(v_i) = N(v_i) \cap \{v_1,\dots,v_{i-1}\}$.
	
	Moreover, if $C_B(G_{i-1};x,y)$ is real-stable, then so is $C_B(G_i;x,y)$.
\end{lemma}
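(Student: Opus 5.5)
The identity is a direct specialization of Lemma~\ref{lem:vertex}, and I plan to prove it that way. Apply Lemma~\ref{lem:vertex} to the graph $G_i$ and the vertex $v=v_i$. Since $G_i\setminus v_i=G_{i-1}$, the first term is $C_B(G_{i-1};x,y)$. In the second term, the neighbourhood of $v_i$ inside $G_i$ is exactly $N_{<i}(v_i)$. By the defining property of a PEO, $G_i[N_{<i}(v_i)]$ is a complete graph.

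For a complete graph $K_S$ on a vertex set $S$, every subset of $S$ is a clique. The generating function therefore factors over the vertices:
\[
C_{B\cap S}(K_S;x,y)=\sum_{T\subseteq S}\prod_{u\in T}x\,y^{\mathbf{1}_{u\in B}}=\prod_{u\in S}\bigl(1+x\,y^{\mathbf{1}_{u\in B}}\bigr).
\]
Substituting $S=N_{<i}(v_i)$ gives the stated recurrence.

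For the ``moreover'' part I would write
\[
P_i(x,y)=x\,y^{\mathbf{1}_{v_i\in B}}\prod_{u\in N_{<i}(v_i)}(1+x\,y^{\mathbf{1}_{u\in B}}).
\]
Each factor $x$, $y$, $1+x$, $1+xy$ is real-stable in $(x,y)$. For $1+xy$, if $\operatorname{Im}x,\operatorname{Im}y>0$ then $\arg(xy)\in(0,2\pi)$, so $xy\neq-1$. Products of stable polynomials are stable, so $P_i$ is real-stable.

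The real difficulty is that a sum of two real-stable polynomials need not be real-stable. Knowing only that $C_B(G_{i-1};x,y)$ is stable does not imply that $C_B(G_{i-1})+P_i$ is stable. The tool I would use is the multivariate Hermite--Biehler / Obreschkoff criterion (Borcea--Brändén): $f+g$ is stable whenever $f$ and $g$ are in proper position, i.e.\ $f+\mathrm{i}g$ is stable.

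So I would strengthen the induction hypothesis to the statement that $C_B(G_{i-1})$ and $P_i$ are in proper position. The natural witness comes from Lemma~\ref{lem:vertex} applied one level down. Both $C_B(G_{i-1})$ and $P_i$ should be shown to arise from a common stable multiaffine polynomial in vertex variables $u_v$, as in the Triangle-Free Case. One would then apply a stability-preserving linear operator, such as $\partial_{u_{v_i}}$ or a specialization $u_v\mapsto x$ or $xy$, exactly as in $F_H=F_{H-v}+u_vF_{H-N[v]}$.

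I expect this step to be the main obstacle, and it is where the hypotheses of the surrounding section must enter. The Neighborhood Geometry lemma ($K_{r+3}$-freeness, $r$-connectivity, triangle-free common neighbourhoods of $r$-cliques) bounds $|N_{<i}(v_i)|\le r+1$. It also reduces the relevant local structure to triangle-free neighbourhoods, where the Triangle-Free Case supplies the needed stable multiaffine parent polynomial.

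My first attempt would be a check on small cases. For general chordal $G$ the bare implication ``$C_B(G_{i-1})$ stable $\Rightarrow C_B(G_i)$ stable'' should be tested against the univariate specialization $y=1$ on a few small chordal graphs. If real-rootedness fails there, the statement must be read as holding only under the section's hypotheses, with the proper-position invariant carried along the PEO.
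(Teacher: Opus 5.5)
Your proof of the recurrence is correct and matches the paper's argument: split the cliques of $G_i$ according to whether they contain $v_i$, and use the PEO to make $N_{<i}(v_i)$ a clique, so the generating function factors. The ``moreover'' part, however, cannot be proved by any strengthened invariant, proper position included, because the implication is false.

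First, your check that $1+xy$ is stable is wrong. The interval $(0,2\pi)$ contains $\pi=\arg(-1)$, and $x=y=\mathrm{i}$ gives $1+xy=0$. A product is stable only if every factor is, so $P_i$ fails to be stable as soon as $N_{<i}(v_i)\cap B\neq\emptyset$. Second, take $G=K_2$ with PEO $v_1,v_2$ and $B=\{v_2\}$. Then $C_B(G_1;x,y)=1+x$ is real-stable, but $C_B(G_2;x,y)=(1+x)(1+xy)$ vanishes at $(\mathrm{i},\mathrm{i})$. Since $K_2$ is $1$-connected, $K_4$-free and chordal, the section's extra hypotheses do not rescue the claim.

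This failure is structural. Because $|K|\ge|K\cap B|$, we have $C_B(G;\lambda x,y/\lambda)\to C(G[B];xy)$ as $\lambda\to0^+$. When $B\neq\emptyset$, the polynomial $C(G[B];w)$ has positive coefficients and degree at least $1$, so it has a root $w_0\notin[0,\infty)$. Such a root factors as $x_0y_0$ with $\operatorname{Im}x_0,\operatorname{Im}y_0>0$. Hurwitz's theorem then forces $C_B(G;x,y)$ itself to vanish somewhere with both imaginary parts positive. So $C_B(G;x,y)$ is never real-stable when $B$ is nonempty.

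The step also fails for $B=\emptyset$, where stability just means real-rootedness of $C(G;x)$. Take $G=K_1\cup K_3$ with the isolated vertex first in the PEO. Then $C(G_3;x)=1+3x+x^2$ is real-rooted, while $C(G_4;x)=1+4x+3x^2+x^3$ has discriminant $-31$, so it has non-real roots.

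You were right that the paper's justification (``sums \ldots\ of multiaffine, positive-coefficient polynomials are real-stable'') is invalid on both counts. Sums do not preserve stability, and $1+xy$ is a positive-coefficient multiaffine polynomial that is not stable. You were also right to want to test small cases. Note, though, that the $y=1$ specialization you proposed would not detect the $K_2$ counterexample, since $(1+x)^2$ is real-rooted; the failure only shows up off the real line in $y$, for instance at $x=y=\mathrm{i}$. The only provable content of this lemma is the recurrence.
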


\begin{proof}
	Each new clique in $G_i$ either:
	
	\begin{enumerate}
		\item lies entirely in $G_{i-1}$, contributing $C_B(G_{i-1};x,y)$, or
		\item contains $v_i$ and some subset $S \subseteq N_{<i}(v_i)$ forming a clique.  
		The contribution of such cliques is
		\[
		x \, y^{\mathbf{1}_{v_i\in B}} \prod_{u\in S} x \, y^{\mathbf{1}_{u\in B}}.
		\]
	\end{enumerate}
	
	Summing over all subsets $S \subseteq N_{<i}(v_i)$ yields
	\[
	x \, y^{\mathbf{1}_{v_i\in B}} \prod_{u \in N_{<i}(v_i)} (1 + x \, y^{\mathbf{1}_{u\in B}}),
	\]
	a multiaffine polynomial with nonnegative coefficients.  
	
	By standard closure properties of real-stable polynomials (sums and products of multiaffine, positive-coefficient polynomials are real-stable), $C_B(G_i;x,y)$ is real-stable.
\end{proof}

\subsection{Main Theorem}

\begin{theorem}
	Let $r\ge1$.  
	If $G$ is $r$-connected, $K_{r+3}$-free, and chordal, then
	\[
	C_B(G;x,y)
	\]
	is real-stable.
\end{theorem}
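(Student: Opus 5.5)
We proceed by induction along a perfect elimination ordering, as the PEO Induction Lemma suggests. Since $G$ is chordal, it has a PEO $v_1,\dots,v_n$. Set $G_i=G[\{v_1,\dots,v_i\}]$, so $G_n=G$ and $C_B(G_0;x,y)=1$. The PEO Induction Lemma gives
\[
C_B(G_i;x,y)=C_B(G_{i-1};x,y)+x\,y^{\mathbf 1_{v_i\in B}}\prod_{u\in N_{<i}(v_i)}\bigl(1+x\,y^{\mathbf 1_{u\in B}}\bigr).
\]
The hypotheses of the theorem enter through the Neighborhood Geometry Lemma. Each $N_{<i}(v_i)$ is a clique, and $G$ is $K_{r+3}$-free, so $|N_{<i}(v_i)|\le r+1$. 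The new term is therefore a product of at most $r+2$ linear factors. Moreover, every $r$-clique has a nonempty, triangle-free common neighbourhood. This lets us regroup the increments clique by clique, so that on each common neighbourhood the relevant piece is a triangle-free clique polynomial. The Triangle-Free Base Case then makes that piece real-stable.

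The key steps, in order, are the following.
\begin{enumerate}
\item Pass to the multiaffine lift $F_G(\mathbf u)=\sum_{K}\prod_{v\in K}u_v$, and check the recursion $F_{G_i}=F_{G_{i-1}}+u_{v_i}\prod_{u\in N_{<i}(v_i)}(1+u_u)$.
\item Prove stability of $F_G$ by the same deletion $F_G=F_{G-v}+u_vF_{G[N(v)]}$ used in the triangle-free theorem. Here $v$ is a simplicial vertex, and the lemma above controls $N(v)$.
\item Specialize $u_v=xy$ for $v\in B$ and $u_v=x$ otherwise, appealing to the closure of stability under specialization.
\end{enumerate}

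The step I expect to be the real obstacle is the last one, together with the closure principle quoted in the PEO lemma. Sums of stable polynomials need not be stable, and the substitution $u_v\mapsto xy$ is not a stability-preserving operation. The basic factor $1+xy$ already vanishes at $x=y=i$, where both imaginary parts are positive. So before trusting the induction, I would first test the smallest cases. For $G=K_1$ with $B=V$ we get $C_B=1+xy$. For $G=K_2$ with $B=V$ we get $C_B=(1+xy)^2$. If these fail to be stable, the argument can only go through after amending the statement. One option is to restrict to $B=\emptyset$, which is the univariate real-rooted case. Another is to replace $y^{|K\cap B|}$ by a genuinely stable weighting, for instance $u_v=x+y$ for $v\in B$.

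For the full argument I would therefore first try to isolate exactly which specialization preserves stability. I would then carry out the PEO induction for that version, using the interlacing of $F_{G-v}$ and $F_{G[N(v)]}$ that comes from the triangle-free base case.
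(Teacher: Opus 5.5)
You already have a counterexample in hand. Treat it as a disproof, not a conditional: the statement is false, so neither your plan nor the paper's proof can be completed.

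For every $r\ge1$, $G=K_{r+1}$ is $r$-connected, $K_{r+3}$-free and chordal. With $B=V(G)$ one gets $C_B(G;x,y)=(1+xy)^{r+1}$, which vanishes at $x=y=i$; your $K_2$ is the case $r=1$.

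In fact stability fails for every graph and every nonempty $B$. Write $C_B(G;x,y)=P(x,xy)$ with $P(x,t)=\sum_K x^{|K\setminus B|}t^{|K\cap B|}$. Then $P(0,t)=C(G[B];t)$ has positive coefficients and degree $\omega(G[B])\ge1$. So it has a root $t_0$ with $\arg t_0\in(0,2\pi)$, and the $t$-leading coefficient of $P$ does not vanish at $x=0$. Take $x=\epsilon e^{i\arg(t_0)/2}$ with $\epsilon>0$ small. Then $P(x,\cdot)$ has a root $t(x)$ near $t_0$, and $y=t(x)/x$ has argument near $\arg(t_0)/2\in(0,\pi)$. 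Thus $C_B(G;x,y)=0$ with $\operatorname{Im}x,\operatorname{Im}y>0$. Only $B=\emptyset$ can survive, and there the claim is real-rootedness of the univariate $C(G;x)$.

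The paper's proof breaks exactly where you pointed. Its PEO lemma appeals to the principle that sums and products of multiaffine polynomials with nonnegative coefficients are real-stable. That is false: $1$ and $xy$ are stable, but $1+xy$ is not. Its triangle-free base case asserts that $u_v\mapsto xy$ preserves stability, which fails for the same reason. Tellingly, connectivity and $K_{r+3}$-freeness do no work there; the argument would apply verbatim to every chordal graph.

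Your Step 2 is false as well. This undercuts your proposed repair: a stable multiaffine lift, specialized by $u_v\mapsto x+y$, with interlacing supplied by the triangle-free case. The lift $F_G(\mathbf u)=\sum_K\prod_{v\in K}u_v$ is not real-stable even for trees. For the path $P_4$ with vertices $a,b,c,d$ in order,
\[
\partial_bF\,\partial_cF-F\,\partial_b\partial_cF=u_au_d,
\]
which is negative at $u_a=1$, $u_d=-1$, violating Br\"and\'en's criterion for multiaffine stability. Concretely, fixing $u_a=1$, $u_d=-1$ (which preserves stability) leaves $1+u_b(2+u_c)$. This vanishes at $u_c=i$, $u_b=(-2+i)/5$.

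So the multivariate triangle-free base case is false too. Its recursion $F_{H-v}+u_vF_{H-N[v]}$ is moreover the independence-polynomial recursion; the clique version uses $F_{H[N(v)]}$. There is no stable multivariate object to specialize. The case $B=\emptyset$ therefore needs a genuinely univariate argument. A natural first step: for connected chordal $G$ the clique complex is contractible, so $C(G;-1)=0$. The question then becomes real-rootedness of $C(G;x)/(1+x)$.
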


\begin{proof}
	By Lemma 4.1 (Neighborhood Geometry), the neighborhoods of all $r$-cliques are triangle-free and chordal.  
	Hence one can construct a perfect elimination ordering of $G$ (or its relevant induced subgraphs) and apply the PEO induction lemma iteratively.  
	
	The base case $G_1$ is either a single vertex (trivially real-stable) or triangle-free (covered by Section 2).  
	Induction using Lemma 5.2 preserves real-stability at each step.  
	
	Therefore $C_B(G;x,y)$ is real-stable.
\end{proof}


\subsection{Worked Example: $r=2$ on a Small Chordal Graph}

\subsection*{Step 0: Define the Graph}

Let $G$ be the graph with vertices 
\[
V(G)=\{v_1,v_2,v_3,v_4\}
\]
and edges
\[
E(G)=\{v_1v_2,\, v_2v_3,\, v_3v_4,\, v_2v_4\}.
\]

This graph is \textbf{chordal} because every cycle of length $>3$ has a chord.  
We set the weighted set 
\[
B = \{v_2, v_3\}.
\]

\subsection*{Step 1: Draw the Graph}

\begin{center}
	\begin{tikzpicture}[scale=1.2]
	\node[draw, circle, fill=yellow!30] (v1) at (0,0) {$v_1$};
	\node[draw, circle, fill=cyan!30] (v2) at (2,0) {$v_2$};
	\node[draw, circle, fill=cyan!30] (v3) at (4,0) {$v_3$};
	\node[draw, circle, fill=yellow!30] (v4) at (3,-1.5) {$v_4$};
	
	\draw (v1) -- (v2);
	\draw (v2) -- (v3);
	\draw (v3) -- (v4);
	\draw (v2) -- (v4);
	
	\node at (1,-0.2) { };
	\node at (3,0.2) { };
	\end{tikzpicture}
\end{center}

\textbf{Color coding:}  
\begin{itemize}
	\item \textcolor{cyan}{Blue nodes:} in $B$  
	\item \textcolor{yellow}{Yellow nodes:} not in $B$
\end{itemize}

\subsection*{Step 2: Find a Perfect Elimination Ordering (PEO)}

A valid PEO is 
\[
v_1, v_2, v_3, v_4.
\]

Verification:

\begin{itemize}
	\item $v_1$: no earlier neighbors $\rightarrow$ trivially a clique.
	\item $v_2$: neighbor $v_1$ among earlier vertices $\rightarrow$ forms a clique.
	\item $v_3$: neighbor $v_2$ among earlier vertices $\rightarrow$ forms a clique.
	\item $v_4$: neighbors $v_2,v_3$ among earlier vertices $\rightarrow$ forms a clique.
\end{itemize}

\subsection*{Step 3: Compute Weighted Clique Polynomial via PEO Induction}

Let $G_i$ denote the induced subgraph on the first $i$ vertices in the PEO.  

\paragraph{Step 3a: $G_1 = \{v_1\}$}

\[
C_B(G_1;x,y) = 1 + \underbrace{x}_{v_1 \notin B}.
\]

\paragraph{Step 3b: $G_2 = \{v_1,v_2\}$}

Neighborhood among earlier vertices: $N_{<2}(v_2) = \{v_1\}$.  

Cliques containing $v_2$:
\[
\{v_2\}, \quad \{v_1,v_2\}.
\]

Weights:
\[
v_2 \in B \implies w_{v_2} = xy, \quad v_1 \notin B \implies w_{v_1}=x
\]

Contribution:
\[
xy + (xy)(x) = xy + x^2 y.
\]

Add to previous polynomial:
\[
C_B(G_2;x,y) = 1 + x + xy + x^2 y = 1 + x + xy + x^2 y.
\]

\paragraph{Step 3c: $G_3 = \{v_1,v_2,v_3\}$}

Neighborhood among earlier vertices: $N_{<3}(v_3) = \{v_2\}$.  

Cliques containing $v_3$:
\[
\{v_3\}, \quad \{v_2,v_3\}.
\]

Weights:
\[
v_3 \in B \implies w_{v_3}=xy
\]

Contribution:
\[
xy + (xy)(xy) = xy + x^2y^2
\]

Add to previous polynomial:
\[
C_B(G_3;x,y) = 1 + x + 2 xy + x^2 y + x^2 y^2
\]

\paragraph{Step 3d: $G_4 = G$}

Neighborhood among earlier vertices: $N_{<4}(v_4) = \{v_2,v_3\}$ (forms a clique).

Cliques containing $v_4$:
\[
\{v_4\}, \{v_2,v_4\}, \{v_3,v_4\}, \{v_2,v_3,v_4\}
\]

Weights:
\[
v_4 \notin B \implies w_{v_4}=x
\]

Contribution:
\[
x \cdot (1 + xy)^2 = x + 2 x^2 y + x^3 y^2
\]

Add to previous polynomial:
\[
\begin{aligned}
C_B(G;x,y) &= (1 + x + 2 xy + x^2 y + x^2 y^2) + (x + 2 x^2 y + x^3 y^2)\\
&= 1 + 2 x + 2 xy + 3 x^2 y + x^2 y^2 + x^3 y^2
\end{aligned}
\]

\subsection*{Step 4: Verify Real-Stability}

Each factor in the PEO induction is \textbf{multiaffine} and has \textbf{nonnegative coefficients}. Multiaffine positive-coefficient polynomials are real-stable. 
Sums and products preserve real-stability.  

\textbf{Conclusion:} $C_B(G;x,y)$ is real-stable.

\subsection*{Step 5: Summary Table of Monomials}

\begin{center}
	\begin{tabular}{c|c}
		Clique & Monomial $x^{|K|}y^{|K\cap B|}$ \\ \hline
		$\emptyset$ & $1$ \\
		$\{v_1\}$ & $x$ \\
		$\{v_2\}$ & $xy$ \\
		$\{v_3\}$ & $xy$ \\
		$\{v_4\}$ & $x$ \\
		$\{v_1,v_2\}$ & $x^2 y$ \\
		$\{v_2,v_3\}$ & $x^2 y^2$ \\
		$\{v_2,v_4\}$ & $x^2 y$ \\
		$\{v_3,v_4\}$ & $x^2 y$ \\
		$\{v_2,v_3,v_4\}$ & $x^3 y^2$ \\
	\end{tabular}
\end{center}


\section*{Visual Mapping of $r=2$ Clique Polynomial}

\subsection*{Graph Definition}

Vertices:
\[
V(G)=\{v_1,v_2,v_3,v_4\}, \quad B=\{v_2,v_3\}.
\]

Edges:
\[
E(G)=\{v_1v_2, v_2v_3, v_3v_4, v_2v_4\}.
\]

\subsection*{Graph Diagram with Clique Coloring}

\begin{center}
	\begin{tikzpicture}[scale=1.3]
	
	\node[draw, circle, fill=yellow!30] (v1) at (0,0) {$v_1$};
	\node[draw, circle, fill=cyan!30] (v2) at (2,0) {$v_2$};
	\node[draw, circle, fill=cyan!30] (v3) at (4,0) {$v_3$};
	\node[draw, circle, fill=yellow!30] (v4) at (3,-1.5) {$v_4$};
	
	\draw (v1) -- (v2);
	\draw (v2) -- (v3);
	\draw (v3) -- (v4);
	\draw (v2) -- (v4);
	
	\draw[fill=red!20] (v1) -- (v2) -- (v2) -- (v1) -- cycle; 
	\draw[fill=red!20] (v2) -- (v3) -- (v3) -- (v2) -- cycle; 
	\draw[fill=red!20] (v2) -- (v4) -- (v4) -- (v2) -- cycle; 
	\draw[fill=red!20] (v3) -- (v4) -- (v4) -- (v3) -- cycle; 
	
	\fill[green!30, opacity=0.4] (v2.center) -- (v3.center) -- (v4.center) -- cycle;
	
	\node[below=0.2cm] at (v2) {\textcolor{blue}{B}};
	\node[below=0.2cm] at (v3) {\textcolor{blue}{B}};
	
	\end{tikzpicture}
\end{center}

\subsection*{Monomial Assignment}

\begin{itemize}
	\item \textcolor{red}{Red edges} correspond to $2$-cliques (edges). Monomials:  
	\[
	x^2y \text{ or } x^2y^2 \text{ depending on $B$ membership.}
	\]
	\item \textcolor{green}{Green triangle} corresponds to the $3$-clique $\{v_2,v_3,v_4\}$, monomial $x^3y^2$.
	\item Single vertices correspond to $1$-cliques: $x$ for $v_1$ or $v_4$, $xy$ for $v_2$ or $v_3$.
\end{itemize}

\subsection*{Resulting Weighted Clique Polynomial}

\[
\begin{aligned}
C_B(G;x,y) &= 1 \quad &&\text{empty clique} \\
&+ x + xy + xy + x &&\text{singletons} \\
&+ x^2y + x^2y^2 + x^2y + x^2y &&\text{edges (2-cliques)} \\
&+ x^3y^2 &&\text{triangle (3-clique)} \\
&= 1 + 2x + 2xy + 3x^2y + x^2y^2 + x^3y^2
\end{aligned}
\]

\subsection*{Step-by-Step Justification}

\begin{enumerate}
	\item Each vertex or clique contributes $x^{|K|}y^{|K\cap B|}$.
	\item Multiaffine polynomials with positive coefficients preserve real-stability.
	\item Summing all contributions corresponds exactly to $C_B(G;x,y)$.
\end{enumerate}

\textbf{Conclusion:} Visual inspection confirms all cliques contribute correctly, and the polynomial is real-stable.



\section{A Spectral Bound for the Bivariate $B$-Clique Polynomial}

Let $G=(V,E)$ be an $(n,d,\lambda)$-graph; that is, a $d$-regular graph on $n$ vertices whose second largest eigenvalue in absolute value is at most $\lambda$.
Let $B \subseteq V$. Recall that the \emph{bivariate $B$-clique polynomial} is defined as
	\[
	C_B(G;x,y) = \sum_{\substack{K \subseteq V \\ K \text{ is a clique}}} x^{|K|} y^{|K \cap B|}.
	\]
We may write this polynomial in the expanded form
	\[
	C_B(G;x,y) = \sum_{i=0}^{\omega(G)} \sum_{j=0}^{\min(i,|B|)} c_{i,j} x^i y^j,
	\]
	where $c_{i,j}$ denotes the number of cliques of size $i$ that contain exactly $j$ vertices of $B$, and $\omega(G)$ is the clique number of $G$.
\\
Our objective is twofold: first, to establish bounds on the coefficients $c_{i,j}$ using spectral expansion; second, to derive from these bounds an estimate for the largest negative root of $C_B(G;x,y)$ when viewed as a polynomial in $x$ for fixed $y \ge 0$.

\subsection{Common Neighborhoods and Spectral Bounds}

\begin{definition}
	For a subset $S \subseteq V$, the \emph{common neighborhood} of $S$ is
	\[
	N(S) = \{ v \in V \setminus S : v \text{ is adjacent to every vertex of } S \}.
	\]
\end{definition}

We begin by establishing a spectral bound on the size of common neighborhoods.

\begin{lemma}[Spectral Bound on Common Neighborhoods]
	\label{lem:common-neighborhood}
	Let $G$ be an $(n,d,\lambda)$-graph. For every nonempty subset $S \subseteq V$ with $|S| = j$,
	\[
	|N(S)| \le \frac{d}{n} j (n-j) + \lambda \sqrt{j(n-j)}.
	\]
\end{lemma}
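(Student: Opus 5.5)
The bound will follow from the expander mixing lemma applied to the cut $(S, V\setminus S)$, together with the trivial observation that every vertex of $N(S)$ sends at least one edge into $S$. Write $T = V\setminus S$, so $|T| = n-j$, and let $e(S,T)$ denote the number of edges with one endpoint in $S$ and the other in $T$. By the definition of common neighborhood, $N(S)\subseteq T$. Each $v\in N(S)$ is adjacent to all $j\ge 1$ vertices of $S$, so double counting gives
\[
j\,|N(S)| \le e(S,T), \qquad\text{hence}\qquad |N(S)| \le e(S,T).
\]
If $j=n$ then $N(S)=\emptyset$ and there is nothing to prove, so I will assume $1\le j\le n-1$. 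It then suffices to show
\[
e(S,T)\le \frac{d}{n}\,j(n-j)+\lambda\sqrt{j(n-j)}.
\]
(In fact we get $|N(S)|$ bounded by $1/j$ times this, which is stronger.)

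For the cut bound I will give the standard spectral argument. Let $A$ be the adjacency matrix and $\mathbf 1$ the all-ones vector, which is the eigenvector for eigenvalue $d$. Decompose the indicator vectors as
\[
\mathbf 1_S = \tfrac{j}{n}\mathbf 1 + f, \qquad \mathbf 1_T = \tfrac{n-j}{n}\mathbf 1 + g, \qquad f,g\perp \mathbf 1.
\]
Then $e(S,T)=\mathbf 1_S^{\top}A\,\mathbf 1_T$. Because $A\mathbf 1 = d\mathbf 1$ and $f,g\perp\mathbf 1$, the two cross terms vanish and we obtain
\[
e(S,T)=\frac{d\,j(n-j)}{n}+f^{\top}Ag.
\]

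The error term is controlled by the spectral gap. Since $g\perp\mathbf 1$, the vector $g$ lies in the span of eigenvectors whose eigenvalues have absolute value at most $\lambda$. By Cauchy--Schwarz, $|f^{\top}Ag|\le \lambda\|f\|\,\|g\|$. A direct computation gives $\|f\|^2=\|g\|^2=j(n-j)/n$, so
\[
|f^{\top}Ag|\le \lambda\,\frac{j(n-j)}{n}.
\]
Finally, $j(n-j)\le n^2$ implies $j(n-j)/n\le\sqrt{j(n-j)}$, which yields the claimed inequality.

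The only step needing real care is this error term. One must check that the cross terms genuinely cancel, which uses $d$-regularity, and that the norms of $f$ and $g$ are computed correctly. Everything else is bookkeeping.
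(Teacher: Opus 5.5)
Your proof is correct and follows the same route as the paper: apply the expander mixing lemma to the cut $(S, V\setminus S)$ and combine it with the double count $j\,|N(S)| \le e(S, V\setminus S)$. The only difference is that you derive the mixing bound yourself, getting the slightly sharper error term $\lambda\, j(n-j)/n$ before weakening it, whereas the paper simply cites the expander mixing lemma.
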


\begin{proof}
	Let $X = S$ and $Y = V \setminus S$. The Expander Mixing Lemma states that
	\[
	\left| e(X,Y) - \frac{d |X||Y|}{n} \right| \le \lambda \sqrt{|X||Y|}.
	\]
	
	Applying this with $|X| = j$ and $|Y| = n-j$, we obtain
	\[
	e(S, V \setminus S) \le \frac{d j (n-j)}{n} + \lambda \sqrt{j(n-j)}. \tag{1}
	\]
	
	Now consider any vertex $v \in N(S)$. By definition, $v$ is adjacent to every vertex in $S$, contributing exactly $j$ edges to $e(S, V \setminus S)$ (one edge to each vertex of $S$). In contrast, any vertex $u \in (V \setminus S) \setminus N(S)$ is adjacent to at most $j-1$ vertices of $S$, as it fails to be adjacent to at least one vertex in $S$.
	
	Therefore, counting edges from $S$ to $V \setminus S$ by their endpoints in $V \setminus S$ yields
	\[
	j\,|N(S)| \le e(S, V \setminus S). \tag{2}
	\]
	
	Combining (1) and (2) gives
	\[
	j\,|N(S)| \le \frac{d j (n-j)}{n} + \lambda \sqrt{j(n-j)},
	\]
	and dividing by $j$ completes the proof.
\end{proof}

\subsection{Bounding the Bivariate Clique Coefficients}

We now use Lemma \ref{lem:common-neighborhood} to control the coefficients $c_{i,j}$.

\begin{theorem}[Spectral Bound on Coefficients]
	\label{thm:coefficient-bound}
	Let $G$ be an $(n,d,\lambda)$-graph and let $B \subseteq V$. For any integers $i,j$ with $1 \le j \le \min(i,|B|)$,
	\[
	c_{i,j} \le \binom{|B|}{j} \binom{M_j}{i-j},
	\]
	where
	\[
	M_j = \max_{\substack{S \subseteq B \\ |S| = j}} |N(S)|
	\le \frac{d}{n} j (n-j) + \lambda \sqrt{j(n-j)}.
	\]
\end{theorem}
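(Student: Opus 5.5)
The plan is a direct double-counting argument: encode each clique counted by $c_{i,j}$ by two pieces of data and bound the number of choices for each.

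\textbf{Step 1: the encoding.} Let $K$ be a clique with $|K| = i$ and $|K \cap B| = j \ge 1$. Put $S = K \cap B$ and $T = K \setminus S$. Then $S$ is a $j$-subset of $B$. Since $K$ is a clique and $T \cap S = \emptyset$, every vertex of $T$ is adjacent to every vertex of $S$. Hence $T \subseteq N(S)$, with $N(S)$ the common neighborhood from the definition just above, and $|T| = i-j$. The map $K \mapsto (S,T)$ is injective, because $K = S \cup T$.

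\textbf{Step 2: counting.} There are at most $\binom{|B|}{j}$ choices for $S$. For each fixed $S$, there are at most $\binom{|N(S)|}{i-j}$ choices for $T$. Since $\binom{m}{i-j}$ is nondecreasing in $m$, this is at most $\binom{M_j}{i-j}$, where $M_j$ is the maximum of $|N(S)|$ over $j$-subsets $S \subseteq B$. Multiplying gives
\[
c_{i,j} \le \binom{|B|}{j}\binom{M_j}{i-j}.
\]
One could sharpen this by also requiring $S$ to be a clique, or $T \cap B = \emptyset$, but neither refinement is needed.

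\textbf{Step 3: the spectral estimate.} The bound
\[
M_j \le \tfrac{d}{n} j(n-j) + \lambda\sqrt{j(n-j)}
\]
follows by applying Lemma~\ref{lem:common-neighborhood} to each $S$ with $|S| = j$ and taking the maximum. Here $j \ge 1$ ensures $S$ is nonempty, which the lemma requires.

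\textbf{Main obstacle.} The only delicate point is the direction of monotonicity when $M_j$ is replaced by the real-valued upper bound. With the generalized binomial coefficient $\binom{M}{k} = M(M-1)\cdots(M-k+1)/k!$, monotonicity holds on $M \ge k-1$. When $M_j < i-j$, the integer binomial is $0$ and the statement holds trivially. So I would state the final bound using the integer quantity $M_j$, exactly as the theorem does, and treat the spectral inequality only as a separate estimate of $M_j$.
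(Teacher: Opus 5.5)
Your proposal is correct and follows essentially the same route as the paper. Like the paper, you write each counted clique as $K = S \cup T$ with $S = K \cap B$ and $T \subseteq N(S)$, bound the choices of $T$ by $\binom{|N(S)|}{i-j} \le \binom{M_j}{i-j}$ and the choices of $S$ by $\binom{|B|}{j}$ (the paper sums over $S$ where you multiply, which amounts to the same thing), and then bound $M_j$ by applying Lemma~\ref{lem:common-neighborhood} to each nonempty $j$-subset of $B$.
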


\begin{proof}
	Fix a subset $S \subseteq B$ with $|S| = j$. Let $\mathcal{K}_{S,i}$ denote the collection of cliques $K$ such that $K \cap B = S$ and $|K| = i$. For any such clique $K$, we must have
	\[
	K = S \cup T,
	\]
	where $T \subseteq N(S)$ and $|T| = i - j$. Indeed, every vertex in $K \setminus S$ must be adjacent to all vertices of $S$, and hence lies in $N(S)$.
	
	Consequently,
	\[
	|\mathcal{K}_{S,i}| \le \binom{|N(S)|}{i-j}.
	\]
	
	Summing over all possible choices of $S$, we obtain
	\[
	c_{i,j} = \sum_{\substack{S \subseteq B \\ |S| = j}} |\mathcal{K}_{S,i}|
	\le \sum_{\substack{S \subseteq B \\ |S| = j}} \binom{|N(S)|}{i-j}
	\le \binom{|B|}{j} \binom{M_j}{i-j},
	\]
	where $M_j$ is defined as the maximum of $|N(S)|$ over all $S \subseteq B$ of size $j$, and the last inequality uses the monotonicity of binomial coefficients in the upper parameter.
	
	The bound on $M_j$ follows directly from Lemma \ref{lem:common-neighborhood}, applied to any $S \subseteq B$ with $|S| = j$.
\end{proof}

\subsection{Consequences for the Largest Negative Root}

Fix $y \ge 0$ and consider $C_B(G;x,y)$ as a polynomial in $x$ alone. We can write
\[
C_B(G;x,y) = \sum_{i=0}^{\omega(G)} a_i(y) x^i,
\]
where
\[
a_i(y) = \sum_{j=0}^{\min(i,|B|)} c_{i,j} y^j.
\]

\begin{remark}
	All coefficients $a_i(y)$ are nonnegative for $y \ge 0$.
\end{remark}

Define the effective degree of this polynomial as
\[
D(y) = \max\{ i : a_i(y) > 0 \}.
\]

\begin{lemma}
	\label{lem:degree-bound}
	For any $y \ge 0$,
	\[
	D(y) \le \max_{0 \le j \le |B|} \left\{ j + \frac{d}{n} j (n-j) + \lambda \sqrt{j(n-j)} \right\}.
	\]
\end{lemma}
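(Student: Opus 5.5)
The plan is to bound $D(y)$ through a single clique that witnesses the top degree. By definition $a_{D(y)}(y) > 0$. Since $a_i(y) = \sum_j c_{i,j} y^j$ with $y \ge 0$ and $c_{i,j}\ge 0$, some $c_{i,j}$ with $i = D(y)$ must be positive. So there is a clique $K$ with $|K| = D(y)$. Put $S = K \cap B$ and $j = |S|$. It then suffices to show that for every clique $K$,
\[
|K| \le \max_{0 \le j \le |B|} \left\{ j + \frac{d}{n} j (n-j) + \lambda \sqrt{j(n-j)} \right\}.
\]

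\textbf{Case $j \ge 1$.} I would reuse the observation from the proof of Theorem~\ref{thm:coefficient-bound}. Every vertex of $K \setminus S$ is adjacent to all of $S$ and lies outside $S$, so $K \setminus S \subseteq N(S)$. Applying Lemma~\ref{lem:common-neighborhood} to the nonempty set $S$ gives
\[
|K| - j \le |N(S)| \le \frac{d}{n} j(n-j) + \lambda\sqrt{j(n-j)} .
\]
This is exactly the $j$-th term of the maximum, because $j \le |B|$.

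\textbf{Case $j = 0$.} This case needs separate care, and I expect it to be the main obstacle. Here $K$ misses $B$ entirely, and the $j=0$ term of the maximum equals $0$, which is useless whenever $K \neq \emptyset$. (If $K=\emptyset$ then $D(y)=0$ and there is nothing to prove.) The fix I would use is to pick any vertex $v \in K$ and apply Lemma~\ref{lem:common-neighborhood} to $S' = \{v\}$. That lemma holds for \emph{any} nonempty $S \subseteq V$, not only subsets of $B$. This gives
\[
|K| - 1 \le |N(\{v\})| \le \frac{d}{n}(n-1) + \lambda\sqrt{n-1},
\]
so $|K|$ is at most the $j=1$ term of the maximum. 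This requires $|B| \ge 1$ so that $j=1$ is in the range of the maximum. If $B = \emptyset$, the statement should be read with the convention that the index range is allowed to include $j=1$, or else the claim should be noted to need this adjustment. I would flag this explicitly, since with $B=\emptyset$ and literally only $j=0$ allowed the inequality fails for any graph with an edge.

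Combining the two cases bounds $|K|$, and hence $D(y)$, by the stated maximum for every $y \ge 0$. At $y=0$ only $j=0$ cliques contribute, so the argument runs entirely through the second case.
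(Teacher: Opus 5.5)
Your proof is correct. For cliques meeting $B$ (the case $j\ge 1$) it is the paper's argument: $K\setminus S\subseteq N(S)$ together with Lemma~\ref{lem:common-neighborhood}, which Theorem~\ref{thm:coefficient-bound} packages as $i-j\le M_j$.

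You differ from the paper at $j=0$, and there your version is the sound one. The paper's proof applies $i\le j+M_j\le j+\frac{d}{n}j(n-j)+\lambda\sqrt{j(n-j)}$ uniformly over $0\le j\le |B|$. But Theorem~\ref{thm:coefficient-bound} is only stated for $j\ge 1$. At $j=0$ the chain would assert $i\le 0$, i.e.\ that no nonempty clique avoids $B$, which is false as soon as $V\setminus B\neq\emptyset$ (then $c_{1,0}>0$).

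Your repair is exactly what this step needs. You apply Lemma~\ref{lem:common-neighborhood} to a singleton $\{v\}\subseteq K$, which is allowed because that lemma holds for every nonempty $S\subseteq V$, and so land in the $j=1$ term. This also exposes the hypothesis $B\neq\emptyset$, which the paper's proof hides.

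Your caveat about $B=\emptyset$ is right but slightly understated. The inequality then fails for every graph with at least one vertex, not just graphs with an edge, since a singleton clique already gives $D(y)\ge 1>0$ while the right-hand side is $0$.
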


\begin{proof}
	If $a_i(y) > 0$, then there exists some $j$ with $0 \le j \le \min(i,|B|)$ such that $c_{i,j} > 0$. By Theorem \ref{thm:coefficient-bound}, a necessary condition for $c_{i,j} > 0$ is that $i - j \le M_j$. Hence
	\[
	i \le j + M_j \le j + \frac{d}{n} j (n-j) + \lambda \sqrt{j(n-j)}.
	\]
	
	The maximum over all possible $i$ for which $a_i(y) > 0$ is therefore bounded by the maximum of the right-hand side over $0 \le j \le |B|$.
\end{proof}

\begin{theorem}[Spectral Bound on the Largest Negative Root]
	\label{thm:negative-root-bound}
	Let $\zeta_G(B;y)$ denote the largest negative root of $C_B(G;x,y)$ (viewed as a polynomial in $x$ for fixed $y \ge 0$). Then
	\[
	\zeta_G(B;y) \ge -\frac{1}{D(y)} \ge -\left( \max_{0 \le j \le |B|} \left\{ j + \frac{d}{n} j (n-j) + \lambda \sqrt{j(n-j)} \right\} \right)^{-1}.
	\]
\end{theorem}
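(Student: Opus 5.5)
The plan is to prove the two inequalities separately. The second is a formal consequence of Lemma~\ref{lem:degree-bound}; the first is a root-location statement about a polynomial in $x$ with nonnegative coefficients, and that is where all the difficulty lies.

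\textbf{Second inequality.} Write $M^\ast$ for the maximum in Lemma~\ref{lem:degree-bound}, so that $D(y) \le M^\ast$. The map $t \mapsto -1/t$ is increasing on $(0,\infty)$, hence $-1/D(y) \ge -1/M^\ast$. This needs $D(y) \ge 1$. That holds because $a_1(y) = (n-|B|) + |B|\,y > 0$ whenever $n \ge 1$ and $y \ge 0$ (for $y=0$ this uses $n > |B|$; if $B=V$ and $y=0$ the polynomial is the constant $1$ and $\zeta_G(B;0) = -\infty$, a degenerate case to exclude explicitly). This step is routine.

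\textbf{First inequality.} Fix $y \ge 0$ and set $p(x) = C_B(G;x,y) = \sum_{i=0}^{D} a_i x^i$, where $a_0 = 1$, $a_i \ge 0$ and $D = D(y)$. I would first try to factor $p$ over its negative real roots $-r_1,\dots,-r_D$, i.e.\ $p(x) = \prod_k (1 + x/r_k)$; real-rootedness would have to come from Section~4 in the chordal case or be assumed. Then $\zeta = -\min_k r_k$. Vieta gives $a_1 = \sum_k 1/r_k$, so by averaging $1/\min_k r_k \ge a_1/D$, that is, $\zeta \le -a_1/D$. This is an \emph{upper} bound on $\zeta$. The required lower bound $\zeta \ge -1/D$ is equivalent to $\min_k r_k \le 1/D$, which would need control of the largest reciprocal root from above. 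I would then try to obtain that control from Theorem~\ref{thm:coefficient-bound} by bounding the coefficients $a_i$ from above.

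\textbf{Main obstacle.} I expect this first inequality to be the real difficulty, and probably false as stated. Already $p(x) = (1+x)^2$, the clique polynomial of two isolated vertices, has $D = 2$ and $\zeta = -1 < -1/2$. The same issue affects the proof of Proposition~\ref{prop:independence}: $(1+x)^\alpha$ has root $-1$, not $-1/\alpha$. So I would first test the claim on such small graphs. If it fails, I would not try to prove it but would aim for a corrected statement that the argument above does support. One option is the upper bound $\zeta_G(B;y) \le -a_1(y)/D(y)$ under real-rootedness. Another is a Cauchy-type lower bound $\zeta_G(B;y) \ge -\bigl(1 + \max_i a_i(y)\bigr)$, valid for any root of $p$ since $a_D \ge 1$. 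The coefficients $a_i(y)$ would then be made explicit via Theorem~\ref{thm:coefficient-bound}, and the degree via Lemma~\ref{lem:degree-bound}.
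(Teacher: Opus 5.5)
\textbf{The statement is false, and you are right to say so.} Your central diagnosis holds, and it goes further than the paper does. The paper proves $\zeta_G(B;y)\ge -1/D(y)$ by appeal to a ``standard comparison argument'': negative roots of a nonnegative-coefficient polynomial satisfy $|\rho|\le 1$ if $a_0>0$, and $\rho\ge -1/m$ if $a_0=0$. That claim is false; for instance $1+3x+x^2=C(K_2\cup K_1;x)$ has the root $-(3+\sqrt5)/2$. Even if it were true it would not yield $-1/D(y)$, because here $a_0=1$. So no proof of the statement can exist.

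\textbf{Your counterexample is misidentified.} Two isolated vertices have clique polynomial $1+2x$, since the pair is not a clique. Its root $-1/2=-1/D$ meets the bound with equality. The polynomial $(1+x)^2$ is $C(K_2;x)$. The counterexample still works with $K_2$, which is a $(2,1,1)$-graph. Take $B=V$ and $y=1$: then $\zeta=-1$, $-1/D(1)=-1/2$, and the maximum on the right equals $5/2$. So both inequalities in the chain fail.

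More generally:
\begin{itemize}
\item $K_n$ at $y=1$ gives $\zeta=-1<-1/n$.
\item For $B=V$, the identity $C_V(G;x,y)=C(G;xy)$ gives $\zeta_G(V;y)=\zeta_G(V;1)/y\to-\infty$ as $y\to 0^+$, while $D(y)=\omega(G)$ stays fixed.
\end{itemize}
The same mix-up affects your remark on Proposition~\ref{prop:independence}. There, $(1+x)^\alpha$ is a misprint for $1+\alpha x$, whose root really is $-1/\alpha$.

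\textbf{The Vieta step has its direction inverted.} From $\max_k 1/r_k\ge a_1/D$ you get $\min_k r_k\le D/a_1$, that is, $\zeta_G(B;y)\ge -D(y)/a_1(y)$, which is a \emph{lower} bound. Your claimed $\zeta\le -a_1/D$ is false: for $1+3x+x^2$ one has $\zeta\approx-0.38>-3/2$. Likewise, $\zeta\ge-1/D$ asks for a lower bound on $\max_k 1/r_k$, not an upper one.

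With the direction fixed, this averaging is exactly the right tool. It gives $\zeta_G(B;y)\ge -D(y)/a_1(y)$, with $a_1(y)=n-|B|+|B|y$.
\begin{itemize}
\item Under real-rootedness this is immediate.
\item Without real-rootedness it still holds. Over all complex zeros $\rho_k$ one has $a_1\le\sum_k 1/|\rho_k|\le D/\min_k|\rho_k|$. Moreover, the minimum-modulus zero of a clique polynomial with nonnegative vertex weights is the negative real one. This is Fisher--Solow at $y=1$, and in general it follows from the alternating-sign property together with Pringsheim's theorem.
\end{itemize}

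\textbf{The other repairs also need fixing.}
\begin{itemize}
\item Your Cauchy-type fallback assumes $a_D(y)\ge 1$, which fails for $y<1$. For $K_2$ with $B=V$, the root $-1/y$ lies below $-2$ once $y<1/2$. The correct form is $|\rho|\le 1+\max_{i<D}a_i(y)/a_D(y)$.
\item For the second inequality you must also exclude $B=\emptyset$. There the maximum is $0$, and Lemma~\ref{lem:degree-bound} itself fails, since $D=\omega(G)$.
\end{itemize}
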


\begin{proof}
	Since all coefficients $a_i(y)$ are nonnegative, the polynomial $C_B(G;x,y)$ has no positive roots. For polynomials with nonnegative coefficients, a standard comparison argument (see, e.g., \cite{rahman2002analytic}) shows that if $P(x) = \sum_{i=0}^m a_i x^i$ with $a_i \ge 0$, then any negative root $\rho$ satisfies $|\rho| \le 1$ when $a_0 > 0$, and more generally $\rho \ge -1/m$ when $a_0 = 0$ and $m$ is the degree. In our case, $D(y)$ is the largest index with positive coefficient, so
	\[
	\zeta_G(B;y) \ge -\frac{1}{D(y)}.
	\]
	
	The second inequality follows from Lemma \ref{lem:degree-bound}.
\end{proof}

\subsection{Discussion}

The bound in Theorem \ref{thm:negative-root-bound} captures genuinely bivariate information through several interconnected mechanisms:

\begin{itemize}
	\item The parameter $j$ precisely tracks the number of vertices from $B$ contained in each clique.
	\item Spectral expansion, via the Expander Mixing Lemma, provides explicit bounds on the size of common neighborhoods $N(S)$.
	\item These neighborhood bounds directly control each coefficient $c_{i,j}$ through Theorem \ref{thm:coefficient-bound}.
	\item The coefficient bounds, in turn, restrict the effective degree $D(y)$ of $C_B(G;x,y)$ as a polynomial in $x$.
	\item Finally, the degree determines the location of the largest negative root through a standard comparison argument for polynomials with nonnegative coefficients.
\end{itemize}

Thus, the expansion properties of the graph impose constraints on how large a clique can intersect the distinguished set $B$, and these constraints are reflected directly in the location of $\zeta_G(B;y)$. This demonstrates a concrete connection between spectral graph theory and the analytic properties of bivariate graph polynomials.


\section{Weighted $B$-Clique Polynomials and Homomorphism Monotonicity}

\subsection{Weighted $B$-Clique Polynomial}

Let $G=(V,E)$ be a finite simple graph and let $B \subseteq V$.

Let $w : B \to \mathbb{Z}_{>0}$ be a weight function.
For a clique $K \subseteq V$, define its $B$-weight by
\[
W_B(K) := \sum_{v \in K \cap B} w(v).
\]

The \emph{weighted $B$-clique polynomial} is
\[
C_{B,w}(G;x,y)
=
\sum_{K \subseteq V \text{ clique}}
x^{|K|} y^{W_B(K)}.
\]

When $w \equiv 1$, this reduces to the usual bivariate polynomial
\[
C_B(G;x,y)
=
\sum_{K \text{ clique}} x^{|K|} y^{|K \cap B|}.
\]

Write
\[
C_{B,w}(G;x,y)
=
\sum_{i \ge 0} \sum_{t \ge 0}
c_{i,t}^{(w)} x^i y^t,
\]
where
\[
c_{i,t}^{(w)}
=
\#\{ K \subseteq V :
K \text{ clique},\ |K|=i,\ W_B(K)=t \}.
\]

All coefficients are nonnegative.

\subsection{Monotonicity Under Weight Increase}

\begin{proposition}
	\label{prop:weight_monotone}
	Let $w_1,w_2 : B \to \mathbb{Z}_{>0}$ satisfy
	\[
	w_1(v) \le w_2(v)
	\quad \text{for all } v \in B.
	\]
	Then for every $y \ge 1$ and every real $x \le 0$,
	\[
	C_{B,w_1}(G;x,y)
	\le
	C_{B,w_2}(G;x,y).
	\]
\end{proposition}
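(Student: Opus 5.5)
The plan is to reduce to the smallest possible change of weights and then use the vertex-deletion recurrence (Lemma~\ref{lem:vertex}) in its weighted form.

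\textbf{Reduction to a single unit increase.} Since $w_1 \le w_2$ pointwise and both take integer values, I can pass from $w_1$ to $w_2$ through a chain of weight functions. Each step in the chain raises the weight of one vertex $v \in B$ by exactly $1$. So it suffices to prove the inequality when $w_2 = w_1 + \mathbf{1}_{\{v\}}$, and then concatenate the steps.

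\textbf{Weighted vertex recurrence.} Repeating the proof of Lemma~\ref{lem:vertex} with $y^{|K\cap B|}$ replaced by $y^{W_B(K)}$ gives
\[
C_{B,w}(G;x,y) = C_{B\setminus\{v\},w}(G\setminus v;x,y) + x\,y^{w(v)}\,C_{B\cap N(v),w}(G[N(v)];x,y).
\]
The first term does not involve $w(v)$. Subtracting the identities for $w_1$ and $w_2$ therefore leaves
\[
C_{B,w_2}(G;x,y)-C_{B,w_1}(G;x,y) = x\,\bigl(y^{w_1(v)+1}-y^{w_1(v)}\bigr)\,C_{B\cap N(v),w_1}(G[N(v)];x,y).
\]
For $y \ge 1$ the middle factor is $\ge 0$, and by hypothesis $x \le 0$. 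Hence the difference is $\ge 0$ exactly when the neighbourhood polynomial $P_v(x,y) := C_{B\cap N(v),w_1}(G[N(v)];x,y)$ is $\le 0$ at the given point.

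\textbf{Main obstacle: the sign of $P_v$.} Everything comes down to controlling the sign of $P_v$ at $(x,y)$, and this is the step I expect to be hardest. The constant term of $P_v$ is $1$, so $P_v > 0$ for $x$ close to $0$.

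\begin{itemize}
\item What I would try first is to show that $P_v(x,y)\le 0$ whenever $x$ lies to the left of the largest negative root of $P_v(\cdot,y)$ and before its next root. This would use the monotonicity of $\zeta$ (Theorem~\ref{thm:induced}, adapted to weights) to compare roots of $G[N(v)]$ with those of $G$.
\item Before writing any of this up, I would test the single-vertex case $G = K_1$, $B = \{v\}$, where $P_v \equiv 1$. There the difference is $x(y^{w+1}-y^w) \le 0$, the opposite of the claim.
\end{itemize}

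So on the full range ``every real $x \le 0$'' the termwise argument cannot close: the difference has the sign of $x\,P_v$, which is nonpositive near $x=0$. The proof as I would carry it out therefore establishes the stated inequality exactly on the set of $x \le 0$ where every intermediate neighbourhood polynomial $P_v$ in the chain is nonpositive. For $x \ge 0$ the same computation gives the inequality with no condition at all. Whether the stated version can be rescued in full (for instance by a different reading of the sign of $x^{|K|}$) is precisely the issue I would have to resolve at this point.
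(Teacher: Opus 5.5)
Your single-vertex test is a counterexample, not a difficulty to be resolved later, and you should state it as one. With $G=K_1$, $B=\{v\}$, $w_1(v)=1$, $w_2(v)=2$ you get $C_{B,w_i}(G;x,y)=1+x\,y^{w_i(v)}$. Hence $C_{B,w_2}-C_{B,w_1}=x(y^2-y)<0$ for every $x<0$, $y>1$; for instance $x=-1/8$, $y=2$ gives $3/4$ versus $1/2$.

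The failure is in fact universal. The linear coefficient of $C_{B,w_2}(G;x,y)-C_{B,w_1}(G;x,y)$ in $x$ is $\sum_{u\in B}\bigl(y^{w_2(u)}-y^{w_1(u)}\bigr)$, which is positive whenever $y>1$ and $w_1\neq w_2$. So for every graph the claimed inequality fails for all sufficiently small negative $x$. The proposition is false except in the trivial cases $y=1$ or $w_1=w_2$, and no proof of it exists.

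The paper's argument compares termwise, $y^{W_{B,w_1}(K)}\le y^{W_{B,w_2}(K)}$, and then multiplies by $x^{|K|}$, claiming this preserves the inequality ``because the same power $|K|$ appears in both expressions''. That is exactly the faulty step: for $x<0$ and $|K|$ odd, $x^{|K|}$ is negative and reverses the inequality. Your weighted vertex recurrence, giving the difference as $x\,(y^{w_1(v)+1}-y^{w_1(v)})\,P_v$ with $P_v(0,y)=1$, exposes the same defect. Your diagnosis is therefore right, but drop the closing hedge about ``rescuing'' the statement, including by another reading of the sign of $x^{|K|}$.

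Here is what is actually true.
\begin{itemize}
\item For $x\ge0$ and $y\ge1$, the termwise argument is valid as written; presumably that was the intended range.
\item For $x\le0$ the inequality goes the other way near the origin: if $y\ge1$ and $\zeta_{G,w_2}(B;y)\le x\le 0$, then $C_{B,w_2}(G;x,y)\le C_{B,w_1}(G;x,y)$. This follows from your unit-step recurrence once every intermediate $P_v$ is known to be nonnegative there. That is the standard Shearer/Scott--Sokal positivity fact. View $C_{B,w}(G;x,y)$ as the multivariate clique polynomial at activities $z_u=x\,y^{w(u)}$ for $u\in B$ and $z_u=x$ for $u\notin B$. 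If it is positive at $s\mathbf z$ for all $0\le s\le1$, then the clique polynomial of every induced subgraph, such as $G[N(v)]$, is positive at every $\mathbf z'$ with $\mathbf z\le\mathbf z'\le 0$.
\end{itemize}

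This reversed inequality is also the one consistent with the paper's next corollary, $\zeta_{G,w_2}(B;y)\ge\zeta_{G,w_1}(B;y)$; in your example $\zeta_{G,w}(B;y)=-y^{-w(v)}$. The stated inequality would instead give $C_{B,w_2}\ge C_{B,w_1}>0$ on $(\zeta_{G,w_1}(B;y),0]$, and hence $\zeta_{G,w_2}(B;y)\le\zeta_{G,w_1}(B;y)$.

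Finally, the set you describe, where all intermediate $P_v\le 0$, is only a sufficient condition, not ``exactly'' where the inequality holds. It never contains a neighbourhood of $0$, and in the $K_1$ example, where $P_v\equiv1$, it is empty.
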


\begin{proof}
	For any clique $K$,
	\[
	W_{B,w_1}(K)
	\le
	W_{B,w_2}(K).
	\]
	
	Since $y \ge 1$, the function $t \mapsto y^t$ is increasing.
	Thus
	\[
	y^{W_{B,w_1}(K)}
	\le
	y^{W_{B,w_2}(K)}.
	\]
	
	Multiplying by $x^{|K|}$ (with $x \le 0$ fixed) preserves the inequality term-by-term because the same power $|K|$ appears in both expressions.
	
	Summing over all cliques gives the desired inequality.
\end{proof}

\begin{corollary}
	Let $\zeta_{G,w}(B;y)$ denote the largest negative root (in $x$) of
	$C_{B,w}(G;x,y)$.
	If $w_1 \le w_2$ pointwise and $y \ge 1$, then
	\[
	\zeta_{G,w_2}(B;y)
	\ge
	\zeta_{G,w_1}(B;y).
	\]
\end{corollary}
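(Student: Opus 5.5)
The plan is an intermediate-value argument on the negative real axis, in the spirit of the proofs of Theorems~\ref{thm:induced} and~\ref{thm:spanning}. Fix $y \ge 1$ and write $P_k(x) := C_{B,w_k}(G;x,y)$ for $k=1,2$. Both are real polynomials in $x$ with nonnegative coefficients and constant term $P_k(0) = 1$, coming from the empty clique. If $P_1$ has no negative root, then $\zeta_{G,w_1}(B;y) = -\infty$ and there is nothing to prove. Otherwise set $\zeta_1 := \zeta_{G,w_1}(B;y)$. By maximality of $\zeta_1$ and continuity, $P_1 > 0$ on $(\zeta_1, 0]$.

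The key step is to show that $P_2(\zeta_1) \le 0$. Once that holds, $P_2(0)=1>0$ and the intermediate value theorem give a root of $P_2$ in $[\zeta_1,0)$, so $\zeta_{G,w_2}(B;y) \ge \zeta_1$.

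To get the sign of $P_2(\zeta_1)$, I would compare the two polynomials at $x=\zeta_1$ using the difference
\[
P_2(x) - P_1(x) = \sum_{K \text{ clique}} x^{|K|}\bigl(y^{W_{B,w_2}(K)} - y^{W_{B,w_1}(K)}\bigr),
\]
in which every bracket is $\ge 0$ because $y \ge 1$ and $w_1 \le w_2$.

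This is the step I expect to be the main obstacle. Proposition~\ref{prop:weight_monotone} as stated gives $P_2(\zeta_1) \ge P_1(\zeta_1) = 0$, which is the wrong direction for the argument. Worse, the term-by-term justification there ignores that $x^{|K|}<0$ when $|K|$ is odd. The comparison therefore has to be redone with the parity of $|K|$ taken into account.

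My first attempt would be to reduce to increasing the weight of a single vertex $v \in B$ by one, and then iterate. For that step, Lemma~\ref{lem:vertex} in its weighted form gives
\[
P(x) = C_{B,w}(G\setminus v;x,y) + x\,y^{w(v)}\,C_{B\cap N(v),w}(G[N(v)];x,y).
\]
Raising $w(v)$ multiplies only the second summand by $y \ge 1$. Hence
\[
P_2(\zeta_1) - P_1(\zeta_1) = (y-1)\,\zeta_1\, y^{w_1(v)}\, C_{B\cap N(v),w_1}(G[N(v)];\zeta_1,y).
\]
This quantity is $\le 0$ provided $C_{B\cap N(v),w_1}(G[N(v)];\zeta_1,y) \ge 0$.

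For that last inequality I would argue as in Case~2 of Theorem~\ref{thm:induced}. The idea is that $G[N(v)]$ is an induced subgraph, so its largest negative root should lie at or below $\zeta_1$. The weighted polynomial of $G[N(v)]$, which is positive at $0$, would then still be nonnegative at $\zeta_1$. This requires checking that the induced-subgraph monotonicity transfers to the weighted setting, and that weighted polynomial has no sign change in $(\zeta_{G[N(v)]},0]$. I would verify this carefully rather than rely on Proposition~\ref{prop:weight_monotone}.

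Finally, induction on $\sum_{v\in B}(w_2(v)-w_1(v))$ chains the single-step inequalities $\zeta^{(\text{old})} \le \zeta^{(\text{new})}$ into the full statement.
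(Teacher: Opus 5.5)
Your proof is correct, and it differs from the paper's route. The paper's own argument does not establish the corollary. The paper derives it in one line from Proposition~\ref{prop:weight_monotone}, the pointwise inequality $C_{B,w_1}(G;x,y)\le C_{B,w_2}(G;x,y)$ for $x\le 0$, plus an unproved claim that both polynomials increase on the negative axis up to their largest negative root.

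As you noticed, that proposition fails for odd $|K|$. With $G=K_1$, $B=V$, $w_1\equiv 1$, $w_2\equiv 2$, $y=2$, $x=-1$, the two sides are $-1$ and $-3$. Even granting the proposition, evaluating at $\zeta_{G,w_2}(B;y)$ would give $C_{B,w_1}\le 0$ there, hence $\zeta_{G,w_1}(B;y)\ge\zeta_{G,w_2}(B;y)$, which is the reverse inequality. In the $K_1$ example the roots are $-y^{-w}$, so the corollary holds strictly and the proposition cannot hold in general.

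Your single-vertex increment is the argument that actually proves the statement. It isolates
\[
P_2-P_1=(y-1)\,x\,y^{w_1(v)}\,C_{B\cap N(v),w_1}(G[N(v)];x,y)
\]
and reduces the sign at $\zeta_1$ to nonnegativity of the neighborhood polynomial.

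The ingredient you flagged does go through. Its sign must be corrected relative to the paper's proof of Theorem~\ref{thm:induced}: there the second term is called nonnegative, but for $x<0$ it is nonpositive, which is exactly what is needed.

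Fix $y$ and write $C(\cdot\,;z)$ for the weighted polynomial, and induct on $|V(G)|$. For $v\notin V(H)$ it suffices to show $\zeta(G-v)\le\zeta(G)$, and the case $\zeta(G-v)=-\infty$ is trivial. Otherwise, at $z=\zeta(G-v)$ the weighted form of Lemma~\ref{lem:vertex} gives $C(G;z)=z\,\mu_v\,C(G[N(v)];z)$. Here $\mu_v=y^{w(v)}$ if $v\in B$ and $\mu_v=1$ otherwise. Since $G[N(v)]$ is an induced subgraph of the smaller graph $G-v$, induction gives $C(G[N(v)];z)\ge 0$. Hence $C(G;z)\le 0<C(G;0)$, and the intermediate value theorem finishes.

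Only $\mu_v>0$ is used, so the hypothesis $y\in[0,1]$ of Theorem~\ref{thm:induced} plays no role here. As a minor simplification, you can raise $w(v)$ from $w_1(v)$ to $w_2(v)$ in one step, replacing $y-1$ by $y^{w_2(v)-w_1(v)}-1\ge 0$. The induction then runs over the vertices where $w_1$ and $w_2$ differ rather than over the total weight difference.
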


\begin{proof}
	By Proposition~\ref{prop:weight_monotone},
	for every $x \le 0$,
	\[
	C_{B,w_1}(G;x,y)
	\le
	C_{B,w_2}(G;x,y).
	\]
	
	Both polynomials have positive coefficients and are strictly increasing for $x \le 0$ up to their largest negative root.
	Hence the zero of $C_{B,w_2}$ cannot lie to the left of the zero of $C_{B,w_1}$.
\end{proof}

\subsection{Behavior Under Surjective Homomorphisms}

We now establish a rigorous homomorphism monotonicity result.

Let $f : G \to H$ be a surjective graph homomorphism.
Assume:

\begin{itemize}
	\item $B_G \subseteq V(G)$,
	\item $B_H = f(B_G)$,
	\item for each $u \in B_H$, define the induced weight
	\[
	w_H(u)
	=
	\sum_{v \in f^{-1}(u) \cap B_G}
	w_G(v).
	\]
\end{itemize}

\begin{lemma}
	\label{lem:clique_lift}
	If $K_H$ is a clique in $H$, then
	\[
	f^{-1}(K_H)
	=
	\bigcup_{u \in K_H} f^{-1}(u)
	\]
	is a clique in $G$.
\end{lemma}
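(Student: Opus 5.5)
The plan is to verify the clique property directly from the definition: a set is a clique in $G$ precisely when every two distinct vertices of it are adjacent. So I fix a clique $K_H$ of $H$ and two distinct vertices $a,b\in f^{-1}(K_H)=\bigcup_{u\in K_H}f^{-1}(u)$. Then $f(a),f(b)\in K_H$, and I split according to whether $a$ and $b$ lie in different fibres or in the same fibre of $f$.

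\textbf{Case 1: $f(a)\neq f(b)$.} Since $K_H$ is a clique, $f(a)f(b)\in E(H)$. I must pull this edge back to $ab\in E(G)$. The forward condition of a homomorphism (edges map to edges) cannot do this by itself. So here I will use the reflecting property implicit in how the section sets up $f$: the map is a surjective quotient in which adjacency in $H$ is inherited from adjacency between fibres. Concretely, $u\sim u'$ in $H$ exactly when every vertex of $f^{-1}(u)$ is adjacent to every vertex of $f^{-1}(u')$. With this, $ab\in E(G)$ follows immediately.

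\textbf{Case 2: $f(a)=f(b)=u$.} Now I need each fibre $f^{-1}(u)$, for $u\in K_H$, to be a clique of $G$. I expect this to be the main obstacle. If $f$ were an ordinary homomorphism into a loopless $H$, an edge inside a fibre would map to a loop. Fibres would then be independent sets, and the lemma could only hold when each fibre over $K_H$ is a single vertex. I would therefore first check which reading the paper needs for the weight-transfer formula $w_H(u)=\sum_{v\in f^{-1}(u)\cap B_G}w_G(v)$ and for the subsequent no-homomorphism criterion. Two readings make the lemma hold:
\begin{itemize}
\item $f$ is the quotient of $G$ by a partition into cliques (a contraction of clique-blocks). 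Then fibres are cliques by construction, and Case 2 is immediate.
\item $f$ is injective over cliques. Then Case 2 cannot occur.
\end{itemize}
I would adopt the first, clique-partition quotient, since it is what makes the summed weights $w_H$ natural. Under it, both cases give $ab\in E(G)$.

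Combining the two cases, every pair of distinct vertices of $f^{-1}(K_H)$ is adjacent, so $f^{-1}(K_H)$ is a clique in $G$. In the write-up I will record explicitly the two properties of $f$ that the proof uses: fibres over vertices of $K_H$ are cliques, and adjacency in $H$ lifts to complete adjacency between fibres. This isolates the hypotheses needed later when the lemma is used to compare $C_{B_G,w_G}(G;x,y)$ with $C_{B_H,w_H}(H;x,y)$. In that comparison, each clique $K_H$ contributes through its preimage, a clique of size $\sum_{u\in K_H}|f^{-1}(u)|\ge|K_H|$ whose $B$-weight is exactly $W_{B_H}(K_H)$.
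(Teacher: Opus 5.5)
There is a genuine problem here, but it sits in the statement itself, and your two-case analysis finds it more accurately than the paper does. The paper's proof takes $a,b\in f^{-1}(K_H)$ and notes that $f(a)f(b)\in E(H)$ because $K_H$ is a clique. It then concludes $ab\in E(G)$ ``because $f$ is a graph homomorphism''. That is the converse of the homomorphism condition, exactly the step you refused to take in Case 1. The argument also tacitly assumes $f(a)\neq f(b)$, so your Case 2 is never addressed.

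Under the hypotheses the section actually imposes ($f\colon G\to H$ a surjective homomorphism of finite simple graphs), the lemma is false. Take $G=P_3$ with edges $ab,bc$, $H=K_2$ on $\{u,v\}$, and $f(a)=f(c)=u$, $f(b)=v$. This is a surjective homomorphism, but the preimage $\{a,b,c\}$ of the clique $\{u,v\}$ is not a clique. In fact, applying the lemma to the singleton cliques $\{u\}$ forces each fibre to be both a clique and an independent set, hence a single vertex. Applying it to edges then forces $f$ to reflect adjacency. So for a surjective homomorphism the lemma holds if and only if $f$ is an isomorphism. In particular, your second reading, once combined with the edge-reflection you need in Case 1, collapses to this trivial case.

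The flaw in your proposal is therefore one of framing rather than logic. You present the clique-partition quotient, with $u\sim u'$ exactly when the two fibres are completely joined, as a property ``implicit in how the section sets up $f$''. The section assumes nothing of the kind. As your own remark about loops shows, such a map is not a homomorphism into a simple graph as soon as some fibre has two vertices. What you prove is a different, essentially definitional, lemma.

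Say so explicitly:
\begin{itemize}
\item give a counterexample to the statement as written;
\item point out that the paper's proof inverts the homomorphism condition;
\item impose your two conditions (fibres over $K_H$ are cliques; adjacency in $H$ implies complete adjacency between the corresponding fibres) as \emph{replacements} for the hypothesis that $f$ is a homomorphism, not as a reading of it.
\end{itemize}
Under those hypotheses your argument is complete.
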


\begin{proof}
	Let $x,y \in f^{-1}(K_H)$.
	Then $f(x), f(y) \in K_H$.
	Since $K_H$ is a clique,
	$\{f(x),f(y)\} \in E(H)$.
	
	Because $f$ is a graph homomorphism,
	$\{x,y\} \in E(G)$.
	Thus $f^{-1}(K_H)$ is a clique.
\end{proof}

\begin{theorem}[Homomorphism Monotonicity]
	\label{thm:hom_monotone}
	Under the above assumptions,
	for every $x \ge 0$ and $y \ge 0$,
	\[
	C_{B_H,w_H}(H;x,y)
	\le
	C_{B_G,w_G}(G;x,y).
	\]
\end{theorem}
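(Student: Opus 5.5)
The plan is to compare the two polynomials term by term. I will build an injection $\Phi$ from the cliques of $H$ into the cliques of $G$ and show that, for $x,y\ge 0$, each monomial $x^{|K_H|}y^{W_{B_H}(K_H)}$ is bounded by the monomial of $\Phi(K_H)$. All coefficients on both sides are nonnegative, so the cliques of $G$ outside the image of $\Phi$ only add nonnegative terms to the right-hand side. The inequality then follows by summing.

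\textbf{Step 1 (the map).} The natural candidate, suggested by Lemma~\ref{lem:clique_lift}, is $\Phi(K_H)=f^{-1}(K_H)$. The empty clique maps to the empty clique, and by the lemma $\Phi(K_H)$ is a clique of $G$. Since $f$ is surjective, every fiber $f^{-1}(u)$ is nonempty. Hence distinct cliques of $H$ have distinct preimages, and $\Phi$ is injective, because $f(\Phi(K_H))=K_H$.

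\textbf{Step 2 (the weight exponent).} By the definition of the induced weight $w_H$, and because fibers over distinct vertices are disjoint,
\[
W_{B_H}(K_H)=\sum_{u\in K_H\cap B_H}\ \sum_{v\in f^{-1}(u)\cap B_G} w_G(v)=\sum_{v\in \Phi(K_H)\cap B_G} w_G(v)=W_{B_G}(\Phi(K_H)).
\]
Here I use $B_H=f(B_G)$: every $v\in\Phi(K_H)\cap B_G$ has $f(v)\in K_H\cap B_H$, and conversely. So the $y$-exponents agree exactly, and $y^{W}$ is the same nonnegative number on both sides.

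\textbf{Step 3 (the size exponent, the main obstacle).} The remaining issue is the $x$-exponent, since $|\Phi(K_H)|=\sum_{u\in K_H}|f^{-1}(u)|\ge |K_H|$. The inequality $x^{|K_H|}\le x^{|\Phi(K_H)|}$ holds termwise when $x\ge 1$, or when all fibers are singletons. For $0\le x<1$ it goes the wrong way.

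My first attempt at a repair is to replace the single clique $\Phi(K_H)$ by the whole family
\[
\mathcal{F}(K_H)=\{K\subseteq \Phi(K_H): K \text{ clique},\ f(K)=K_H\}.
\]
These families are pairwise disjoint for distinct $K_H$. The family contains transversals of size exactly $|K_H|$, which are cliques by Lemma~\ref{lem:clique_lift}, since subsets of cliques are cliques. One then needs a transversal $T\in\mathcal{F}(K_H)$ with $W_{B_G}(T)\ge W_{B_H}(K_H)$ if $y\ge1$, or with $W_{B_G}(T)\le W_{B_H}(K_H)$ if $y\le 1$. For $y\le 1$, choose $T$ to meet $B_G$ in at most one vertex per fiber, which works because $w_G$ is positive. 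For $y\ge1$ a single transversal undercounts the fiber weight. There I would sum $x^{|K|}y^{W(K)}$ over all of $\mathcal{F}(K_H)$ and expand
\[
\prod_{u\in K_H}\Bigl(\prod_{v\in f^{-1}(u)}(1+x y^{w(v)})-1\Bigr),
\]
comparing it with $\prod_{u}x\,y^{w_H(u)}$.

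I expect this last comparison to be exactly where the statement may need an extra hypothesis, such as $x\ge1$, injectivity of $f$ off $B_G$, or singleton fibers. If the comparison fails in the regime $0<x<1<y$, I would record the termwise argument of Steps 1--2 for the regime where it is valid, and flag the general case.
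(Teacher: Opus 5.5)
There is a genuine gap, and it sits before Step 3. Your map $\Phi(K_H)=f^{-1}(K_H)$ lands in cliques of $G$, and your transversals in the repair are cliques, only because of Lemma~\ref{lem:clique_lift}, and that lemma is false. A homomorphism only transports adjacency forward: $vw\in E(G)\Rightarrow f(v)f(w)\in E(H)$. The lemma's proof uses the converse. It also silently assumes $f(v)\neq f(w)$. Since $H$ is loopless, every fiber $f^{-1}(u)$ is an independent set in $G$. So $f^{-1}(K_H)$ is not a clique as soon as some fiber over $K_H$ has two vertices. Even with singleton fibers it need not be a clique, because $H$ may have edges that $G$ lacks. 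Consequently Steps 1--2 give no valid termwise comparison in any regime. That includes the fallback regimes ($x\ge 1$, or singleton fibers) that you proposed to keep.

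In fact the statement itself is false, so no repair of Step 3 can succeed. Take $G=P_3$ with vertices $a,b,c$ and edges $ab,bc$, and $H=K_3$ on $\{1,2,3\}$. Let $f$ be the bijection $a\mapsto 1$, $b\mapsto 2$, $c\mapsto 3$; it is a surjective homomorphism. With $B_G=V(G)$ and $w_G\equiv 1$ we get $B_H=V(H)$ and $w_H\equiv 1$, and
\[
C_{B_H,w_H}(H;x,y)-C_{B_G,w_G}(G;x,y)=(1+3xy+3x^2y^2+x^3y^3)-(1+3xy+2x^2y^2)=x^2y^2+x^3y^3 .
\]
This is positive for all $x,y>0$. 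Here the fibers are singletons, $f$ is injective, and $x\ge 1$ is allowed, so none of the extra hypotheses you list rescues the claim.

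The underlying reason is that homomorphisms push cliques forward: each clique of $G$ maps bijectively onto a clique of $H$ of the same size. For example, a vertex-bijective homomorphism makes $G$ (up to relabeling) a spanning subgraph of $H$, and then the inequality goes the other way.

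The paper's own proof is exactly your Steps 1--3: the same lemma, the same weight identity, and the same termwise bound $x^{|K_H|}\le x^{|f^{-1}(K_H)|}$. It therefore has both the defect you correctly spotted for $0\le x<1$ and the false lemma you relied on without checking. The right outcome for this statement is a counterexample, not a proof.
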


\begin{proof}
	Let $K_H$ be a clique in $H$.
	By Lemma~\ref{lem:clique_lift},
	$f^{-1}(K_H)$ is a clique in $G$.
	
	Moreover,
	\[
	|f^{-1}(K_H)|
	=
	\sum_{u \in K_H}
	|f^{-1}(u)|.
	\]
	
	For the weights,
	\[
	W_{B_G}(f^{-1}(K_H))
	=
	\sum_{u \in K_H}
	w_H(u)
	=
	W_{B_H}(K_H).
	\]
	
	Thus each clique $K_H$ contributes the monomial
	\[
	x^{|K_H|} y^{W_{B_H}(K_H)}
	\]
	to $C_{B_H,w_H}(H;x,y)$,
	
	while $f^{-1}(K_H)$ contributes
	\[
	x^{|f^{-1}(K_H)|}
	y^{W_{B_G}(f^{-1}(K_H))}
	\]
	to $C_{B_G,w_G}(G;x,y)$.
	
	Since $|f^{-1}(K_H)| \ge |K_H|$ and $x \ge 0$,
	we have
	\[
	x^{|K_H|}
	\le
	x^{|f^{-1}(K_H)|}.
	\]
	
	Therefore each term of $C_{B_H,w_H}$ is bounded above by a corresponding term of $C_{B_G,w_G}$.
	
	Summing over all cliques of $H$ yields
	\[
	C_{B_H,w_H}(H;x,y)
	\le
	C_{B_G,w_G}(G;x,y).
	\]
\end{proof}

\begin{corollary}
	For every $y \ge 0$,
	\[
	\zeta_{G,w_G}(B_G;y)
	\ge
	\zeta_{H,w_H}(B_H;y).
	\]
\end{corollary}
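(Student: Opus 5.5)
Fix $y \ge 0$ and write $P_G(x) := C_{B_G,w_G}(G;x,y)$ and $P_H(x) := C_{B_H,w_H}(H;x,y)$, viewed as polynomials in $x$ alone. Both have nonnegative coefficients and constant term $1$, coming from the empty clique. My plan is to compare them on the negative axis rather than the positive one. Theorem~\ref{thm:hom_monotone} only gives $P_H \le P_G$ for $x \ge 0$, and this says nothing directly about negative roots, so I will not try to pass to roots via that inequality.

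\textbf{Step 1: rewrite $P_G$ clique by clique through $f$.} Any clique $K$ of $G$ maps injectively onto a clique $f(K)$ of $H$, since adjacent vertices have adjacent, hence distinct, images. Conversely, by Lemma~\ref{lem:clique_lift}, every subset of $f^{-1}(K_H)$ is a clique of $G$. Grouping the cliques of $G$ according to $f(K)=K_H$ gives
\[
P_G(x)=\sum_{K_H}\ \prod_{u\in K_H}\Bigl(\,\sum_{\emptyset\ne S\subseteq f^{-1}(u)} x^{|S|}y^{W(S)}\Bigr)
=\sum_{K_H}\prod_{u\in K_H} q_u(x),
\]
where
\[
q_u(x)=\prod_{v\in f^{-1}(u)}\bigl(1+x\,y^{w_G(v)[v\in B_G]}\bigr)-1 .
\]
In other words, $P_G$ is the independence-type polynomial of the clique complex of $H$ with each vertex weight $x$ replaced by $q_u(x)$.

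\textbf{Step 2: monotonicity of roots under this substitution.} Rather than comparing weighted polynomials directly, I would view both sides as the multivariate clique polynomial
\[
F_H(\mathbf{t})=\sum_{K_H}\prod_{u\in K_H}t_u .
\]
We have $P_G = F_H(q_u(x))$, and $P_H$ is $F_H$ evaluated at the analogous weights built from $x$ and the induced weights $w_H$. The intended argument is then the one used in Theorem~\ref{thm:induced}. Let $x_0 = \zeta_{G,w_G}(B_G;y)$. By definition of the largest negative root, $P_G$ is positive on $(x_0,0]$. I would show that, at each $x \in (x_0,0]$, the substituted weights for $H$ dominate those for $G$ componentwise in the "less negative" direction, so $P_H$ stays positive on $(x_0,0]$. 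That yields $\zeta_{H,w_H}(B_H;y) \le x_0$.

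The positivity transfer itself would go by induction on $|V(H)|$ via the vertex recurrence, Lemma~\ref{lem:vertex}, applied to $F_H$ with vertex weights $t_u$. The induction would carry the stronger hypothesis that $F_H$ is positive on the whole box $\prod_u [t_u^{G},0]$, following the standard Scott--Sokal-type monotonicity for root-free regions.

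\textbf{Main obstacle.} The hard step is Step 2: comparing $q_u(x)$ with the $H$-side weight for $x<0$. Since $q_u(x) = |f^{-1}(u)|\,x + O(x^2)$, it is typically \emph{more} negative than the single-vertex weight, and this is the favorable direction for the claimed inequality. However, the higher-order terms of $q_u$ alternate in sign, so the pointwise comparison is only clear near $x = 0$. I would first try to confine attention to the interval $x \in (x_0,0]$, using $|x_0| \le 1$ when it is available from Theorem~\ref{thm:negative-root-bound}-type bounds. If that fails, I would fall back on the box-positivity formulation above. The fallback works because positivity of $F_H$ on a box propagates to any smaller box, which in turn follows from $F_H(\mathbf{t})$ being positive and multiaffine with the recurrence $F_H = F_{H-u} + t_u F_{H[N(u)]}$.
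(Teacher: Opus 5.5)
Your proposal has a genuine gap, and in fact the statement cannot be proved as written.

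\textbf{Step~1 is false.} It rests on Lemma~\ref{lem:clique_lift}, which is itself wrong: a homomorphism pushes edges forward but does not pull them back. Since $H$ is loopless, every fiber $f^{-1}(u)$ is an independent set of $G$; this is your own observation that adjacent vertices have distinct images. So a clique of $G$ meets each fiber in at most one vertex. Conversely, a transversal of the fibers over a clique of $H$ need not be a clique of $G$. For instance, map $G=\overline{K_2}$, with both vertices in $B_G$ and $w_G\equiv 1$, onto $H=K_1$. Your formula gives $P_G=(1+xy)^2$, whereas in fact $P_G=1+2xy$. So $P_G$ is not a specialization of $F_H$, and Step~2 has nothing to act on.

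\textbf{No repair can cover all $y\ge 0$, because the statement is false for $y>1$.} In the same example $w_H(u)=2$, so $P_H=1+xy^2$. This gives $\zeta_{G,w_G}(B_G;y)=-1/(2y)$ and $\zeta_{H,w_H}(B_H;y)=-1/y^2$, and at $y=3$ the claimed inequality reads $-1/6\ge -1/9$, which is false. Taking $G=\overline{K_m}$ instead gives $-1/(my)$ versus $-y^{-m}$. The inequality fails as soon as $y^{m-1}>m$, so every $y>1$ admits a counterexample.

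This is exactly the comparison you flagged as the main obstacle. The $H$-side activity $x\,y^{w_H(u)}$ is multiplicative in the fiber weights, and for $y>1$ it is more negative than anything the fiber contributes on the $G$ side. Your expansion $q_u(x)=|f^{-1}(u)|\,x+O(x^2)$ drops the $y$-factors, and that is where this is lost.

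The paper's own argument fails too. Theorem~\ref{thm:hom_monotone} is violated by the same example for $y>2$. And, as you correctly said, an inequality on $x\ge 0$ cannot locate negative roots anyway.

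\textbf{What does hold is the range $0\le y\le 1$,} by a different argument. Write each polynomial as $\sum_K\prod_{v\in K}x\lambda_v$ with activities $\lambda_v=y^{w(v)[v\in B]}$, and pick one vertex from each fiber to form a set $R$. Then:
\begin{itemize}
\item $G[R]$ is an induced subgraph of $G$;
\item $f|_R$ identifies $G[R]$ with a spanning subgraph of $H$;
\item for $y\le 1$, each activity $y^{w_H(u)[u\in B_H]}$ is at most that of the chosen representative.
\end{itemize}
Induced-subgraph monotonicity gives $\zeta_{G[R],w_G}(B_G\cap R;y)\le \zeta_{G,w_G}(B_G;y)$. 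The largest negative root can only move left when edges are added or activities decrease; this is the down-set property of the zero-free region of multivariate independence polynomials, applied to complements. Hence $\zeta_{H,w_H}(B_H;y)\le \zeta_{G[R],w_G}(B_G\cap R;y)$, and combining the two inequalities gives the corollary for $0\le y\le 1$.
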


\begin{proof}
	Fix $y \ge 0$ and view both polynomials as univariate in $x$.
	By Theorem~\ref{thm:hom_monotone},
	for all $x \ge 0$,
	\[
	C_{B_H,w_H}(H;x,y)
	\le
	C_{B_G,w_G}(G;x,y).
	\]
	
	Since both polynomials have positive coefficients,
	their largest negative roots satisfy the stated inequality by standard comparison of positive-coefficient polynomials \cite{rahman2002analytic}.
\end{proof}

\subsection{Interpretation}

The weighted $B$-clique polynomial is monotone:

\begin{itemize}
	\item Increasing vertex weights moves the largest negative root to the right.
	\item Passing to a covering graph (via a surjective homomorphism) also moves the largest negative root to the right.
\end{itemize}

Thus $\zeta_{G,w}(B;y)$ is an obstruction parameter for weighted homomorphic images.


\section{Conclusion and Open Problems}

We introduced the bivariate $B$-restricted clique polynomial
\[
C_{B}(G; x,y)
=
\sum_{K \subseteq V(G)\ \mathrm{clique}}
x^{|K|} y^{|K\cap B|},
\]
as a vertex-sensitive refinement of the classical clique polynomial.
This framework separates global clique growth from the contribution
of a distinguished vertex set $B$, thereby interpolating between
local neighborhood structure and global expansion properties.

Our results establish several structural principles:

\begin{itemize}
	\item[(1)] A deletion framework extending classical vertex-edge recurrence identities
	for graph polynomials to the $B$-restricted setting.
	
	\item[(2)] Monotonicity properties of the largest negative root
	under induced and spanning subgraphs.
	
	\item[(3)] Real-stability for the case of highly-connected chordal graphs. 
	
	\item[(4)] Spectral control of $B$-restricted clique coefficients
	in $(n,d,\lambda)$-graphs, showing that clique growth inside $B$
	is governed by the spectral gap.
	
	\item[(5)] Homomorphism monotonicity: surjective homomorphisms
	mapping $B_G$ onto $B_H$ induce root inequalities
	$\zeta_G(B_G) \ge \zeta_H(B_H)$, yielding a
	root-based obstruction principle.

\end{itemize}

This interaction between combinatorial enumeration,
root location, and structural graph theory suggests
several directions for further investigation.

\subsection*{Open Problems}

\begin{enumerate}

	\item \textbf{Necessity of Conditions:} Are the conditions of $r$-connectivity and chordality also necessary? Is there an $r$-connected $K_{r+3}$-free non-chordal graph for which $C_B(G;x,y)$ fails to be real-stable?

    \item \textbf{Flat Graphs:} In previous work \cite{teimoori2026Flatclique}, we showed that $K_4$-free flat graphs have real-rooted clique polynomials. Can the lifting technique be extended to flat graphs, possibly using a different geometric condition instead of chordality?

    \item \textbf{Weighted Generalizations:} Can this result be extended to more general vertex weightings, perhaps with arbitrary real weights rather than just $0/1$ distinctions for $B$?

    \item \textbf{Algorithmic Applications:} Real-stable polynomials have applications in approximation algorithms and counting. Can our result be used to design efficient algorithms for counting cliques in highly connected chordal graphs?	
	
	\item \textbf{Sharp Spectral Bounds.}  
	In $(n,d,\lambda)$-graphs, determine optimal upper and lower bounds for coefficients of $F_G(x,y)$
	in terms of $d$ and $\lambda$. Is there a root-based characterization of spectral expanders via $\zeta_G(B)$?
	
	\item \textbf{Homomorphism Order and Root Monotonicity.}  
	Does $\zeta_G(B)$ induce a partial order on rooted graph pairs $(G,B)$ compatible with graph homomorphisms?
	Can one classify minimal obstructions under this order?

\end{enumerate}

\medskip

The bivariate $B$-restricted clique polynomial thus opens
a program at the intersection of extremal graph theory,
spectral expansion, and algebraic stability theory.
It refines classical clique enumeration by embedding
local vertex data into a multivariate analytic framework,
suggesting that root geometry may serve as a bridge
between local combinatorics and global expansion phenomena.


\end{document}